\def\pathPic{Illustrations}
\def\pathCommands{Includes}
\def\pathBib{\pathCommands}
\def\cB{{\cal B}}
\def\cD{{\cal D}}
\def\cF{{\cal F}}
\def\mR{{\mathbb R}}
\def\mU{{\mathbb U}}
\def\mZ{{\mathbb Z}}
\def\R{\mR}
\def\Z{\mZ}
\def\ve{\varepsilon}
\def\vp{\varphi}
\DeclareMathOperator\Tr{Tr}
\DeclareMathOperator\diam{diam}
\DeclareMathOperator\Vor{Vor}
\def\<{\langle}
\def\>{\rangle}
\def\sm{\setminus}
\newtheorem{theorem}{Theorem}[section]
\newtheorem{remark}[theorem]{Remark}
\newtheorem{lemma}[theorem]{Lemma}
\newtheorem{proposition}[theorem]{Proposition}
\newtheorem{corollary}[theorem]{Corollary}
\newtheorem{definition}[theorem]{Definition}
\newtheorem*{proposition*}{Proposition}
\newtheorem*{theorem*}{Theorem}
\newtheorem*{lemma*}{Lemma}
\DeclareMathOperator\Hull{Hull}
\renewcommand\subset{\subseteq}
\renewcommand\supset{\supseteq}
\newcommand{\oset}[2]{%
  {\mathop{#2}\limits^{\vbox to -.5\ex@{\kern-\tw@\ex@
   \hbox{\scriptsize #1}\vss}}}}
\def\RZ2{\cF(\Z^2)}
\def\WS{{\text{\tiny WS}}}
\def\FD{{\text{\tiny FD}}}
\author{
Jean-Marie Mirebeau%
\footnote{
CNRS, University Paris Dauphine, UMR 7534, Laboratory CEREMADE, Paris, France.\newline
ANR grant NS-LBR ANR-13-JS01-0003-01
}
}
\DeclareMathOperator\Leb{Leb}
\DeclareMathOperator\Newton{Newton}
\begin{document}
\title{
Discretization of the 3D Monge-Ampere operator,\\
between Wide Stencils and Power Diagrams
}
\maketitle
\date{}

\begin{abstract}
We introduce a monotone (degenerate elliptic) discretization of the Monge-Ampere operator, on domains discretized on cartesian grids. The scheme is consistent provided the solution hessian condition number is uniformly bounded. 
Our approach enjoys the simplicity of the Wide Stencil method \cite{Froese:2011ed}, but significantly improves its accuracy using ideas from discretizations of optimal transport based on power diagrams \cite{Aurenhammer:1998ie}.
We establish the global convergence of a damped Newton solver for the discrete system of equations.
Numerical experiments, in three dimensions, illustrate the scheme efficiency. 
\end{abstract}

\section{Introduction}

We introduce a discretization of the Monge-Ampere operator, on three dimensional cartesian grids, which is simultaneously monotone and consistent.
Existing consistent schemes, based e.g.\ on Finite Elements \cite{Brenner:2012ka,Neilan:2012iu} or Finite Differences \cite{Loeper:2005fn}, are not monotone, and thus require the PDE solution to be sufficiently smooth, and the numerical solver to be well initialized. 
Existing monotone schemes, based on Wide Stencil discretizations \cite{Froese:2011ed,Oberman:2006bd}, suffer from a consistency error depending on the discretization stencil angular resolution.  
Filtered schemes \cite{Froese:2013ez} combine a monotone and a consistent scheme, and attempt to cumulate their robustness and accuracy; improving either of the constituting schemes will benefit to the filtered combination.
A monotone and consistent scheme is introduced in \cite{Benamou:2014wb}, but it is limited to two dimensions. Geometric approaches \cite{Oliker:1989kz} are discussed in the third paragraph.

The proposed numerical scheme belongs to the Wide-Stencil category \cite{Oberman:2006bd}, in the sense that we actually define a family of schemes parameterized by a user chosen stencil. Larger stencils provide consistency for strongly anisotropic problems (i.e.\ for which the solution hessian is almost degenerate), but at the cost of an increased computation time.
The choice of stencil is left to the user; let us mention that in the special case of \cite{Benamou:2014wb} an automatic (solution adaptive, local, anisotropic, and parameter free) stencil construction could be designed.
Our numerical experiments show that small stencils, of radius $\sqrt 3$ or $\sqrt 6$, see the table page \pageref{table:Stencils}, already yield convincing results. The scheme is dimension independent, but we emphasize its application to three dimensional domains, which is tractable and tested.

Our approach is also inspired by \cite{Oliker:1989kz} and the discretizations of optimal transport \cite{Aurenhammer:1998ie,Merigot:2011js,Levy:2014un} based on global geometric structures, up to two differences. 
The first modification, a \emph{symmetrization} see Remark \ref{rem:Symmetrization}, is required to operate our method with the Dirichlet boundary conditions of the standard Monge-Ampere problem \eqref{eq:MAD}, instead of the second boundary conditions implicit in optimal transportation.
The second modification \emph{localizes} these methods by limiting interactions to close discretization points, see  Remark \ref{rem:Localization}, which considerably simplifies their numerical implementation.
The methods \cite{Oliker:1989kz,Aurenhammer:1998ie,Merigot:2011js,Levy:2014un} indeed rely on global geometric structures named power diagrams, which generalize Voronoï diagrams. 
Their construction requires state of the art methods of discrete geometry, which especially in 3D are still an active subject of research. For instance \cite{Levy:2014un} mentions arbitrary precision arithmetic, arithmetic filtering, expansion arithmetics and symbolic perturbation, merely for the consistent evaluation of geometric predicates. Our approach is in contrast local and requires none of these subtleties.
We show that this simplification preserves consistency in the setting of viscosity solutions, see \S \ref{subsec:Viscosity}, but at the following price: the solution hessian condition number must be uniformly bounded, and the weak Alexandroff solutions cannot be recovered.


We fix throughout this paper an open, convex and bounded domain $\Omega \subset \mR^d$, $d\geq 2$ ($d=3$ in the numerical section \S \ref{sec:Num}).  
Given a positive density $\rho \in C^0(\overline \Omega, \R_+^*)$, and some Dirichlet data $\sigma \in C^0(\partial \Omega, \R)$, we set the goal of approximating numerically the unique viscosity solution \cite{Crandall:1992kn,Gutierrez:2001wq} of
\begin{equation}
\label{eq:MAD}
\begin{cases}
\det(\nabla^2 u) = \rho & \text{on } \Omega, \\
u = \sigma & \text{on } \partial \Omega,\\
u  \text{ convex,}
\end{cases}
\end{equation}
where $\nabla^2 u$ denotes the hessian matrix of $u$.
The PDE domain $\Omega$ is discretized on a cartesian grid $X$. Up to a linear change of coordinates, encoding scaling, rotation and offset, we may assume that
\begin{equation*}
X \subset \Omega \cap \mZ^d.
\end{equation*}
The discussion of the discretization of the boundary $\partial \Omega$ is postponed to Remark \ref{rem:BoundaryDiscretization}.
\begin{definition}
\label{def:DiscreteMaps}
We denote by $\mU$ the collection of maps $u : X \cup \partial \Omega \to \mR$.
\end{definition}
\begin{definition}
A discrete operator $\cD$ associates to each $u \in \mU$ a discrete map $\cD u : X \to \mR$.
\end{definition}
Given some discretization $\cD$ of the Monge-Ampere operator, the counterpart of \eqref{eq:MAD} takes the form: find $u \in \mU$ such that 
\begin{equation}
\label{eq:DiscreteSys}
\begin{cases}
\cD u = \rho & \text{on } X,\\
u = \sigma & \text{on } \partial \Omega.
\end{cases}
\end{equation}
The constraint ``$u$ convex'' is not spelled explicitly in \eqref{eq:DiscreteSys}, contrary to \eqref{eq:MAD}, but some discrete counterpart of it often follows from the identity $\cD u = \rho$ \cite{Benamou:2014wb}.
Before discretizing the Monge-Ampere operator, we need to introduce a more basic operator $\Delta_e$, $e \in \mZ^d$, aimed at approximating the second order difference $ \<e, (\nabla^2 u (x)) e\>$. If $x \in X$, $x+e \in X$ and $x-e \in X$, then we set classically
\begin{equation}
\label{eqdef:Delta}
\Delta_e u(x) := u(x+e) - 2 u(x) + u(x-e).
\end{equation}
In general, one may not have $x+e \in X$, for instance if $x+e$ lies outside $\Omega$. Hence we introduce 
\begin{equation}
\label{eqdef:hxe}
h_x^e := \min \{ h>0; \, x+h e \in X \cup \partial \Omega\}.
\end{equation}
Let $h^+ := h_x^e$ and $h^- := h_x^{-e}$. Only one linear combination of $u(x)$, $u(x+h^+ e)$ and $u(x-h^-e)$ is consistent with $\<e, (\nabla^2 u (x)) e\>$, namely
\begin{equation}
\label{eqdef:DeltaBoundary}
\Delta_e u (x) := \frac 2 {h^++h^-} \left(\frac {u(x+h^+ e)-u(x)}{h^+}+ \frac{u(x-h^{-} e)-u(x)}{h^-}\right). 
\end{equation}

 We next illustrate the notions of consistency and monotonicity (degenerate ellipticity) using two now classical discretizations of the Monge-Ampere operator.
The Finite Differences (FD) \cite{Loeper:2005fn} approximation of the Monge-Ampere operator is defined as the determinant of a naive but consistent approximation of the hessian matrix of $u$ by finite differences: denoting by $(e_i)_{1 \leq i \leq d}$ the canonical basis of $\mR^d$
\begin{equation}
\label{eqdef:FD}
\cD^\FD u (x) := \det(\delta_{ij})_{1 \leq i, j \leq 3}, \quad \text{ with }
\delta_{ij} = 
\begin{cases}
\Delta_{e_i} u(x) & \text{ if } i=j,\\
\frac 1 4 \left(\Delta_{e_i+e_j} u(x) - \Delta_{e_i-e_j} u(x)\right) &\text{ if } i \neq j.
\end{cases}
\end{equation}
The Wide Stencil (WS) \cite{Froese:2011ed} approximation of the Monge-Ampere operator is defined as follows: denoting by $\cB \subset (\mZ^d)^d$ a finite collection of $d$-plets of pairwise orthogonal vectors
\begin{equation}
\label{eqdef:WS}
\cD^\WS_\cB u (x):= \min_{B \in \cB} \prod_{e \in B}  \frac{\max \{\Delta_e u(x), 0\} }{\|e\|^2}.
\end{equation}
\begin{definition}
An operator $\cD$ is said Degenerate Elliptic of second order (DE2), with stencil $V$, iff for all $x \in X$,  $\cD u(x)$ is a non-decreasing function of the second order differences $(\Delta_e u(x))_{e \in V}$. 
\end{definition}
By construction, scheme WS is DE2, with stencil $\{e\in \mZ^d; \, \exists B \in \cB, \, e\in B\}$. Scheme FD in contrast  is \emph{not} DE2. Degenerate Ellipticity comes with strong guarantees including a maximum principle for the discrete solutions of \eqref{eq:DiscreteSys}, and guaranteed convergence of Euler iterative solvers for this discrete system \cite{Oberman:2006bd}. It allows to recover non-smooth viscosity solutions of \eqref{eq:MAD}, see \S \ref{subsec:Viscosity}, and plays a crucial role in the proof \S\ref{subsec:Newton} of the global convergence of a damped Newton solver for \eqref{eq:DiscreteSys}.

Let $S_d$ denote the set of symmetric $d\times d$ matrices, and let $S_d^+ \subset S_d$ be the subset of positive definite matrices.
In order to analyze the consistency of these schemes, we introduce for each $M \in S_d^+$ the norm $\|\cdot\|_M$ and the map $u_M\in \mU$ defined by
\begin{align}
\label{eqdef:MNorm}
\|x\|_M &:= \sqrt{\<x, M x\>}, & u_M(x) := \frac 1 2 \|x\|_M^2.
\end{align}
\begin{definition}
\label{def:ConsistencySet}
The consistency set of an operator $\cD$ is the collection of all $M \in S_d^+$ such that 
$\cD u_M = \det(M)$, identically on $X$.
\end{definition}
The consistency set of scheme FD is the whole $S_d^+$; in fact, the identity $\cD u_M(x) = \det(M)$ also holds for non-positive matrices, although they are irrelevant for our application. The consistency set of scheme WS is in contrast of \emph{empty interior}; precisely, it consists of matrices $M \in S_d^+$ for which some $B \in \cB$ is an eigenbasis \cite{Oberman:2006bd}. Stated otherwise, the consistency of scheme WS is an asymptotic property, obtained by growing the stencil size to infinity.

A combination 
$\cD u (x) =  w(u,x) \cD^\WS_\cB u(x)+ (1-w(u,x)) \cD^\FD u(x)$ 
of schemes FD and WS is considered in \cite{Froese:2013ez}. The weight $w(u,x) \in [0,1]$ depends on the local behavior of $u$ close to $x$, as well as on the discretization scale and the angular resolution of the stencil $\cB$. Strictly speaking, the weighted scheme loses both the degenerate ellipticity of $\cD^\WS_\cB$, and the consistency of $\cD^\FD$. 
Nevertheless \cite{Froese:2013ez} establishes convergence in the setting of viscosity solutions, and illustrates numerically (in two space dimensions) that the weight $w(u,x)$ favors the consistent scheme $\cD^\FD$ except close to the most singular features of the solution. Second order accuracy may in principle be achieved even on a degenerate equation, contrary to the method proposed below. The filtered scheme construction proposed in \cite{Froese:2013ez} is quite flexible, and could be improved by replacing the monotone scheme WS with the more accurate, and still monotone, scheme proposed in this paper.

We propose an alternative solution to the apparent conflict between 
between degenerate ellipticity and consistency. The coordinates of a vector $e=(a_1, \cdots, a_d) \in \mZ^d$ are said co-prime iff $\gcd(a_1, \cdots, a_d)=1$.
\begin{definition}
\label{def:DV}
	We limit our attentions to stencils $V \subset \mZ^d \sm \{0\}$ which are finite, symmetric w.r.t the origin, span $\mR^d$, and which elements have co-prime coordinates. The proposed operator is  
\begin{equation}
\label{eqdef:LBR}
\cD_V u(x) := \Leb \{g \in \mR^d; \, \forall e \in V, \, 2\<g,e\> \leq \Delta_e u(x)\},
\end{equation}
where $\Leb$ denotes $d$-dimensional Lebesgue measure.
\end{definition}
The Degenerate Ellipticity of $\cD_V$ is clear: if any second order difference $\Delta_e u(x)$ increases, then the convex polytope appearing in \eqref{eqdef:LBR} increases for inclusion, hence also in volume. Refining slightly this argument we obtain the derivative of $\cD_V u(x)$ with respect to $\Delta_e u(x)$: namely the $(d-1)$-dimensional measure of the facets of \eqref{eqdef:LBR} defined by the equality constraint $2 |\<g,e\>| = \Delta_e u(x)$, divided by $\|e\|$ (as a result, $\cD_V u(x)$ is continuously differentiable in $u$, in contrast with \eqref{eqdef:WS}). 
Computing polytopes defined by linear inequalities like \eqref{eqdef:LBR} is, by convex duality, equivalent to computing the convex envelope of a set of points \cite{Preparata:2012uk}. Numerous computer libraries are available for that purpose, such as TetGen\textsuperscript\textregistered or CGAL\textsuperscript\textregistered (the author made his own routine which takes advantage of the symmetry of the polytope \eqref{eqdef:LBR}).

The expression \eqref{eqdef:LBR} seems more complex to evaluate than \eqref{eqdef:FD} or \eqref{eqdef:WS}. Yet in our numerical experiments \S \ref{sec:Num}, the cost of evaluating the operator $\cD_V$ was dominated by the cost of solving linear systems (when solving \eqref{eq:DiscreteSys} with a damped Newton solver), for which the MUMPS\textsuperscript\textregistered library was used.
The applicability of Newton's method to the Monge-Ampere problem is also investigated in \cite{Loeper:2005fn}, for the continuous problem, and in \cite{Froese:2011ed} and \cite{Neilan:2012iu}, for some discretizations. Its super-linear rate of convergence, in the neighborhood of the problem solution, makes it appealing for applications. In contrast, gradient descent \cite{Aurenhammer:1998ie} or first order Euler \cite{Oberman:2006bd} schemes converge slower but benefit from global convergence guarantees with monotone discretizations (iterates converge to the unique solution of the discrete problem, independently of initialization). 
For the discretization of interest, we prove \S \ref{subsec:Newton} that a damped Newton method benefits from \emph{both} a super-exponential rate of convergence close to the discrete problem solution, and a global convergence guarantee.

The following results,  established \S \ref{subsec:Consistency}, show that the consistency set of the proposed scheme is of non-empty interior, in contrast with scheme WS. By Corollary \ref{corol:FiniteStencil}, a finite stencil is sufficient to guarantee consistency for all matrices $M \in S_d^+$ with condition number below a given bound. 
\begin{definition}
\label{def:Voronoi}
For each matrix $M \in S_d^+$, we introduce the Voronoi cell and facet
\begin{align*}
\Vor(M) &:= \{g \in \mR^d;\, \forall e \in \mZ^d, \, \|g\|_M \leq \|g-e\|_M\}, \\
\Vor(M; e) &:= \{g \in \Vor(M); \, \|g\|_M = \|g-e\|_M\}.
\end{align*}
An $M$-Voronoi vector is an element $e \in \mZ^d\sm \{0\}$ such that $\Vor(M; e)\neq \emptyset$; it is said \emph{strict} iff the facet $\Vor(M;e)$ is $(d-1)$-dimensional.
\end{definition}

\begin{figure}
\centering
	\includegraphics[width=3cm]{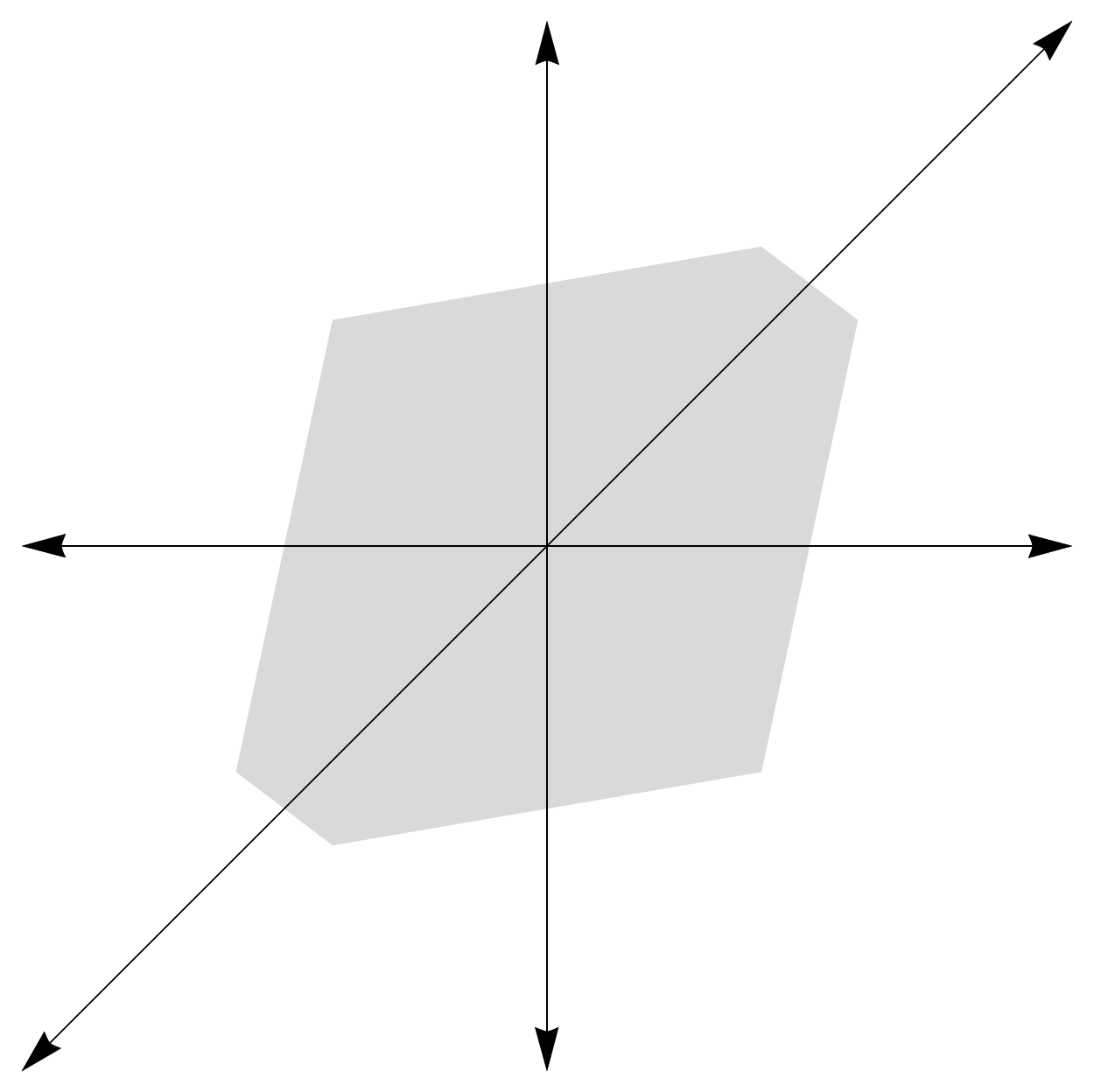}
	\includegraphics[width=3cm]{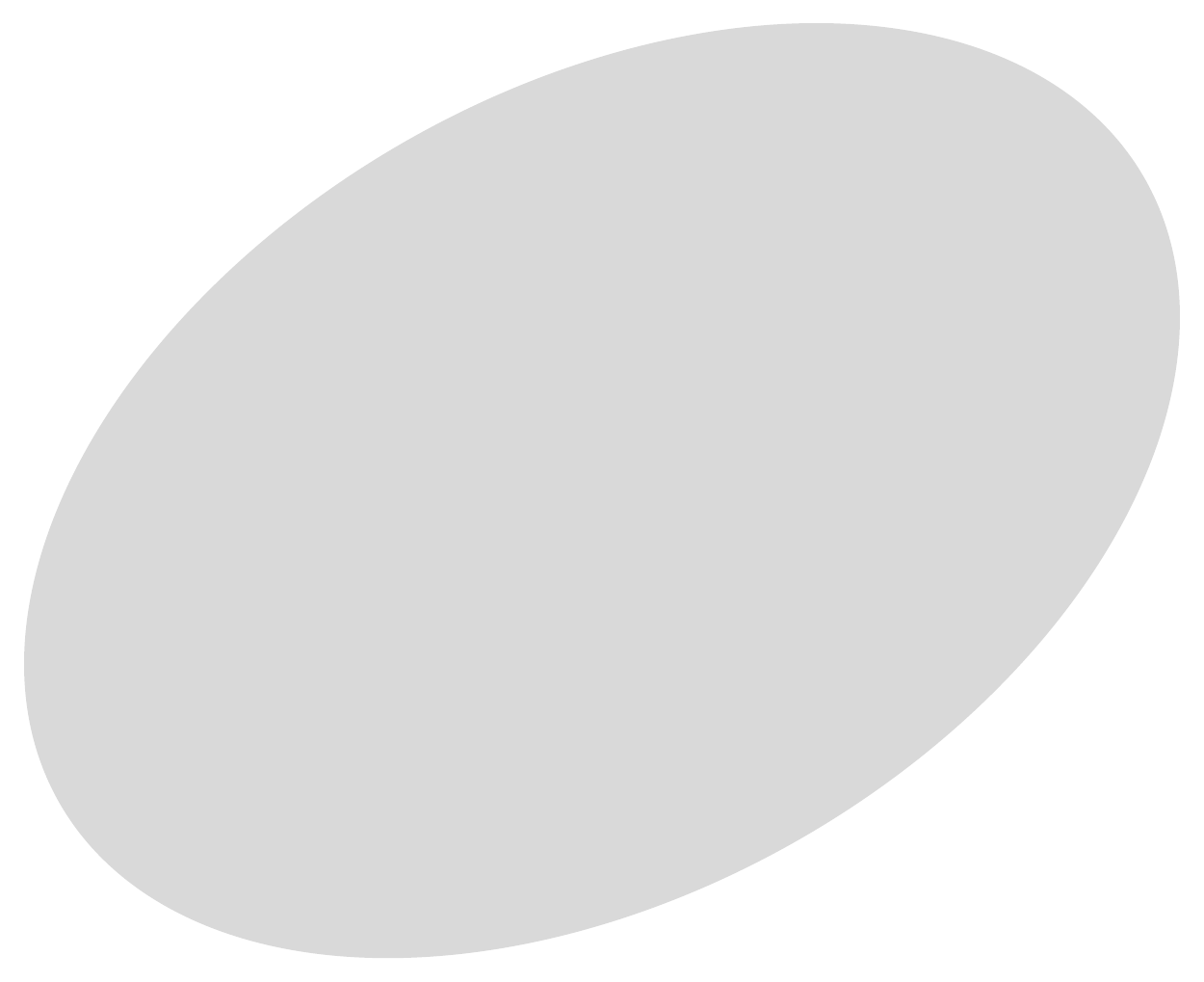}
	\includegraphics[width=5.5cm]{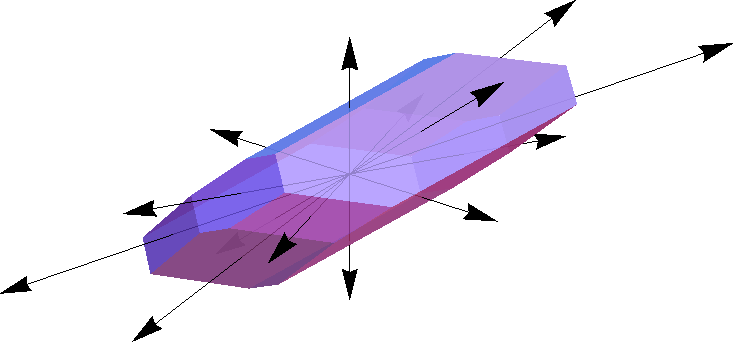}
	\includegraphics[width=4cm]{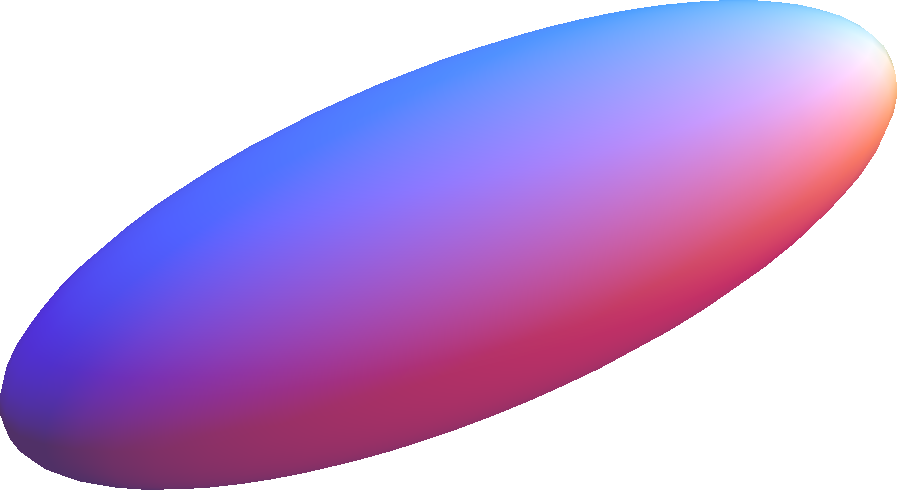}
	\caption{Voronoi cell $\Vor(M)$, Voronoi vectors $e$, and ellipse $\{x \in \mR^d; \, \<x, Mx\> \leq 1\}$. Left: 2D, Right: 3D. Note that $e$ traverses the facet $\Vor(M;e)$ at the point $e/2$.}
\end{figure}

\begin{proposition}[Consistency]
\label{prop:Consistency}
A matrix $M$ is in the consistency set of $\cD_V$ iff $V$ contains all strict $M$-Voronoi vectors. 
\end{proposition}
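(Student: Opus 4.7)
The plan is to reduce the consistency identity $\cD_V u_M = \det(M)$ to a volumetric statement about a polytope, and then to interpret that polytope as a truncated Voronoi cell via a linear change of variables. First I would verify the key algebraic identity $\Delta_e u_M(x) = \|e\|_M^2$, for every $x\in X$ and every $e\in V$. This is immediate from \eqref{eqdef:Delta}; under the boundary-adjusted formula \eqref{eqdef:DeltaBoundary} it still holds because $u_M$ is quadratic, so the terms linear in $h^+$ and $h^-$ cancel after the prescribed weighted sum. Plugging this into \eqref{eqdef:LBR} yields
\[
\cD_V u_M(x) = \Leb(P), \qquad P := \{g \in \mR^d;\ \forall e \in V,\ 2\langle g,e\rangle \leq \|e\|_M^2\},
\]
a value independent of $x \in X$. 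Hence consistency reduces to the single identity $\Leb(P) = \det(M)$.

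Second, I would perform the linear change of variables $g = M h$. The defining constraint becomes $2\langle h, Me\rangle \leq \|e\|_M^2$, which, upon completing the square, is equivalent to $\|h\|_M \leq \|h-e\|_M$. Therefore $P = M \cdot \Vor_V(M)$, where
\[
\Vor_V(M) := \{h \in \mR^d;\ \forall e \in V,\ \|h\|_M \leq \|h-e\|_M\} \supset \Vor(M).
\]
Since $\det(M) > 0$, this gives $\Leb(P) = \det(M)\cdot \Leb(\Vor_V(M))$. The integer translates of $\Vor(M)$ tile $\mR^d$ and the lattice $\mZ^d$ has covolume $1$, so $\Leb(\Vor(M)) = 1$. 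Consequently the consistency identity is equivalent to $\Leb(\Vor_V(M)) = \Leb(\Vor(M))$, and since both sets are convex with $\Vor_V(M) \supset \Vor(M)$, this is in turn equivalent to the set equality $\Vor_V(M) = \Vor(M)$.

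It then remains to show that $\Vor_V(M) = \Vor(M)$ iff $V$ contains every strict $M$-Voronoi vector. For the direction $(\Leftarrow)$, I would invoke the standard fact that $\Vor(M)$ is a bounded convex polytope whose facets are exactly the $(d-1)$-dimensional sets $\Vor(M;e)$ with $e$ a strict Voronoi vector; all remaining bisecting half-spaces are redundant. Thus if $V$ contains all strict Voronoi vectors, cutting by $V$ already produces $\Vor(M)$. For the converse $(\Rightarrow)$, assuming a strict Voronoi vector $e_0 \notin V$, I would pick a point $p$ in the relative interior of the $(d-1)$-dimensional facet $\Vor(M; e_0)$, and consider the perturbation $p + \varepsilon M e_0$ for small $\varepsilon > 0$. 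The direction $Me_0$ is the outward normal to the bisecting hyperplane between $0$ and $e_0$, so the perturbation violates the constraint indexed by $e_0$; while continuity at $p$, together with the relative-interior choice ensuring all other facet constraints are strict at $p$, guarantees that every constraint indexed by $V$ remains satisfied. This witnesses $p + \varepsilon M e_0 \in \Vor_V(M) \setminus \Vor(M)$, hence strict inclusion.

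The main subtlety I anticipate lies in this last step, specifically in justifying that non-strict Voronoi constraints are redundant in the definition of $\Vor(M)$. This is classical in the theory of Delaunay/Voronoi decompositions of positive quadratic forms, but deserves either a precise reference or a short self-contained argument exploiting the boundedness of $\Vor(M)$ and the uniqueness of the bisecting hyperplane associated with a given vector $e$. A secondary, minor verification is the invariance $\Delta_e u_M(x) = \|e\|_M^2$ under the modified finite difference \eqref{eqdef:DeltaBoundary} near $\partial \Omega$, mentioned above.
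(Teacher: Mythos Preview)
Your proposal is correct and follows essentially the same route as the paper: compute $\Delta_e u_M(x)=\|e\|_M^2$, apply the linear change of variables $g=Mh$ to identify the polytope with $M\cdot\Vor_V(M)$, use $\Leb(\Vor(M))=1$, and reduce consistency to the equality $\Vor_V(M)=\Vor(M)$, which holds iff $V$ contains all strict $M$-Voronoi vectors. The paper packages the last equivalence as a separate corollary (Corollary~\ref{corol:InclusionEquality}) and the volume identity as Lemma~\ref{lem:VorVol}, whereas you argue them inline and give a slightly more explicit perturbation for the $(\Rightarrow)$ direction; the content is the same.
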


\begin{corollary}[Finite stencils are enough]
\label{corol:FiniteStencil}
Let $\kappa \geq 1$ and let $V$ collect all elements $e \in \mZ^d$ with norm $\|e\| \leq \kappa \sqrt d$ and co-prime coordinates. Then 
the consistency set of $\cD_V$ contains all $M \in S_d^+$ such that $\kappa \geq \sqrt {\|M\| \|M^{-1}\|}$.
\end{corollary}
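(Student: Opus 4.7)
My plan is to reduce to Proposition \ref{prop:Consistency} by showing that under the assumption $\kappa \geq \sqrt{\|M\|\|M^{-1}\|}$, every strict $M$-Voronoi vector $e$ automatically lies in $V$. Two things must be checked: that $e$ has co-prime coordinates, and that $\|e\| \leq \kappa\sqrt d$.

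The co-primality is actually a general feature of Voronoi vectors and I would dispatch it first. If $e = k e'$ with $k\geq 2$ and $e'\in\mZ^d$, then every $g\in\Vor(M)$ satisfies the bisector inequality for $e'$, namely $2\<g,Me'\>\leq \|e'\|_M^2$; multiplying by $k$ gives $2\<g,Me\>\leq k\|e'\|_M^2 < k^2\|e'\|_M^2 = \|e\|_M^2$, with strict inequality. Hence $\|g\|_M < \|g-e\|_M$ for every $g\in\Vor(M)$, so $\Vor(M;e)=\emptyset$, contradicting that $e$ is a Voronoi vector.

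For the norm bound I would argue via the covering radius of $\mZ^d$ in the $\|\cdot\|_M$-norm. Pick any $g\in\Vor(M;e)$; the triangle inequality combined with the defining identity $\|g\|_M = \|g-e\|_M$ yields $\|e\|_M \leq \|g\|_M + \|g-e\|_M = 2\|g\|_M$. Since $g\in\Vor(M)$, the origin is a $\|\cdot\|_M$-closest lattice point to $g$, so $\|g\|_M \leq \|g-v\|_M$ for every $v\in\mZ^d$. Taking $v$ to be the component-wise rounding of $g$ gives $\|g-v\|\leq \sqrt d/2$, hence $\|g-v\|_M \leq \sqrt{\|M\|}\,\sqrt d/2$ by the operator norm inequality $\|\cdot\|_M \leq \sqrt{\|M\|}\,\|\cdot\|$. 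Combining, $\|e\|_M \leq \sqrt d\,\sqrt{\|M\|}$.

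Finally, the lower bound $\|e\|_M \geq \|e\|/\sqrt{\|M^{-1}\|}$ (which follows from the smallest eigenvalue of the SPD matrix $M$ being $1/\|M^{-1}\|$) converts this into $\|e\| \leq \sqrt d\,\sqrt{\|M\|\|M^{-1}\|} \leq \kappa\sqrt d$, as required. The only non-immediate step is the covering-radius estimate $\mu(M)\leq \tfrac{\sqrt d}{2}\sqrt{\|M\|}$, but the coordinate-wise rounding argument reduces it to an elementary inequality, so no deeper geometry-of-numbers input is needed.
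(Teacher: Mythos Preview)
Your proposal is correct and follows essentially the same route as the paper: reduce to Proposition~\ref{prop:Consistency} and then verify that every (strict) $M$-Voronoi vector has co-prime coordinates and Euclidean norm at most $\kappa\sqrt d$. The paper packages these two facts as Lemma~\ref{lem:VoronoiRadius} and simply writes ``Combine Proposition~\ref{prop:Consistency} and Lemma~\ref{lem:VoronoiRadius}''; you have re-derived that lemma inline, with arguments matching the paper's almost verbatim (rounding to the nearest lattice point for the norm bound, and a slightly cleaner bisector-inequality argument for co-primality than the identity the paper uses).
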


\paragraph{Outline: }
The results on consistency are established \S \ref{subsec:Viscosity}, and the global convergence of the damped Newton solver in \S \ref{subsec:Newton}. We also study in \S \ref{subsec:Viscosity} convergence as the grid scale tends to zero, in the setting of viscosity solutions. Numerical experiments \S \ref{sec:Num} illustrate the method's efficiency.

\begin{remark}[Symmetrization]
\label{rem:Symmetrization}
A non-symmetrical variant of our Monge-Ampere operator discretization \eqref{eqdef:LBR} can be defined as follows: 
 for $x \in X$ such that $x+e \in X$ for all $e \in V$
\begin{equation}
\label{eqdef:AsymmetricOperator}
\cD'_V u(x) := \Leb\{ g \in \mR^d; \, \forall e \in V, \, \<g,e\> \leq u(x+e)-u(x)\}.
\end{equation}
In the case of a quadratic function $u_M$, $M \in S_d^+$, the polytope appearing in \eqref{eqdef:AsymmetricOperator} is merely a translation of \eqref{eqdef:LBR} by the vector $M x$, so that $\cD'_V u_M = \cD_V u_M$. 
In the case of a general $u$, the polytope \eqref{eqdef:AsymmetricOperator} can be non-symmetric, in contrast with \eqref{eqdef:LBR}. This asymmetry can actually help extract weak Alexandroff solutions of \eqref{eq:MAD} with non-symmetric subgradient sets, such as the two Diracs test case in \cite{Benamou:2014vl}.

The drawback of the expression \eqref{eqdef:AsymmetricOperator} is that it is only correctly defined sufficiently far from the boundary $\partial \Omega$. There is no simple way, to the knowledge of the author, to incorporate in \eqref{eqdef:AsymmetricOperator} the values of $u$ on $\partial \Omega$, and remain consistent with the Monge-Ampere operator $\det(\nabla^2 u)$ in the strong sense of Definition \ref{def:ConsistencySet}. Hence our choice of \eqref{eqdef:LBR}, defined in terms of the symmetric second order differences $\Delta_e u(x)$, $e \in V$.
\end{remark}

\begin{remark}[Localization]
\label{rem:Localization}
Consider the point dependent, largest possible stencil $V(x) = \{e \in \mZ^d; \ x+e \in X\}$. Then $\cD'_{V(x)} u(x)$, see  \eqref{eqdef:AsymmetricOperator}, is the measure of the power cell associated to $x$ in the power diagram based discretization \cite{Oliker:1989kz,Aurenhammer:1998ie,Merigot:2011js,Levy:2014un} of optimal transport. Since the stencils $(V(x))_{x \in X}$ are typically huge, it is not reasonable to evaluate $\cD'_{V(x)} u(x)$ independently for each $x$. Instead a global power diagram needs to be constructed, using complex geometric procedures which validity requires extremely careful evaluations of geometric predicates \cite{Levy:2014un}.
\end{remark}


\section{Proofs of the main results} 
\label{sec:Proofs}

We establish in \S \ref{subsec:Consistency} the numerical scheme consistency, show in \S \ref{subsec:Newton} the global convergence of a damped Newton solver for the discrete system, and prove in \S \ref{subsec:Viscosity} a convergence result in the setting of viscosity solutions as the discretization grid scale tends to zero.

\subsection{Consistency}
\label{subsec:Consistency}

Our consistency analysis relies on elementary properties of Voronoi cells of lattices, see Definition \ref{def:Voronoi}, and \cite{Conway:1992cq} for more on this topic. Our first step is to bound their diameter.
Let $\kappa(M)^2$ denote the condition number of a matrix $M \in S_d^+$, i.e.\ $\kappa(M) := \sqrt{\|M\| \|M^{-1}\|}$.
\begin{lemma}
\label{lem:VoronoiRadius}
Let  $M \in S_d^+$. Any point $g \in \Vor(M)$ satisfies $\|g\| \leq \frac 1 2 \kappa(M) \sqrt d$. Any $M$-Voronoi vector $e$ satisfies $\|e\| \leq \kappa(M) \sqrt d$, and has co-prime coordinates.
\end{lemma}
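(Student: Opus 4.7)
I would handle the three assertions in sequence, using throughout that $\Vor(M)$ is a fundamental domain for the $\mZ^d$ translation action, and that the norm $\|\cdot\|_M$ is sandwiched between Euclidean norm times $1/\sqrt{\|M^{-1}\|}$ and $\sqrt{\|M\|}$, whose ratio is exactly $\kappa(M)$. To bound $\|g\|$ for $g\in\Vor(M)$, I would round each coordinate of $g$ to produce $n\in\mZ^d$ with $g-n\in[-\tfrac12,\tfrac12]^d$, so that $\|g-n\|\leq \sqrt d/2$. Testing the definition of $\Vor(M)$ against this particular $n$ yields $\|g\|_M\leq \|g-n\|_M$, and the sandwich then gives $\|g\|\leq \tfrac12\kappa(M)\sqrt d$.

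For the bound on $\|e\|$, pick any $g\in\Vor(M;e)$. The key observation is that $g-e$ also lies in $\Vor(M)$: for every $f\in\mZ^d$,
\[ \|(g-e)-f\|_M = \|g-(e+f)\|_M \geq \|g\|_M = \|g-e\|_M, \]
using $g\in\Vor(M)$ against the integer vector $e+f$, together with $\|g\|_M=\|g-e\|_M$ from $g\in\Vor(M;e)$. Applying the previous estimate to both $g$ and $g-e$, the triangle inequality yields $\|e\|\leq \|g\|+\|g-e\|\leq \kappa(M)\sqrt d$.

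For the co-prime property I would argue by contradiction: suppose $e=ke'$ with $e'\in\mZ^d$ and integer $k\geq 2$. For any $g\in\Vor(M;e)$, consider the one-variable convex parabola $\varphi(t):=\|g-te'\|_M^2$. By construction $\varphi(0)=\|g\|_M^2=\|g-ke'\|_M^2=\varphi(k)$, and strict convexity forces $\varphi(t)<\varphi(0)$ for every $0<t<k$. Evaluating at $t=1$ gives $\|g-e'\|_M<\|g\|_M$, contradicting $g\in\Vor(M)$ since $e'\in\mZ^d\sm\{0\}$.

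\emph{Anticipated obstacle.} Nothing here is technically deep, but the two symmetry observations---that $g-e$ is again a Voronoi center, and that a nontrivial integer factor $k\geq 2$ produces two distinct minimum-tying points on a convex parabola---need to be spotted before the computations become routine; the rest is elementary eigenvalue bookkeeping for $\|\cdot\|_M$.
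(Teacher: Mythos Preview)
Your proof is correct and follows essentially the same route as the paper's: the rounding argument for $\|g\|$, the triangle-inequality bound for $\|e\|$, and the convexity contradiction for co-primality (which the paper phrases via the explicit identity $\|ke-g\|_M^2+(k-1)\|g\|_M^2=k\|e-g\|_M^2+(k^2-k)\|e\|_M^2$) all coincide in substance. The only cosmetic difference is that for the bound on $\|e\|$ the paper implicitly takes the special point $g=e/2\in\Vor(M;e)$ to obtain $\|e\|=2\|g\|$ directly, whereas you prove $g-e\in\Vor(M)$ and apply the first bound to both $g$ and $g-e$; the two arguments give the same estimate.
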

\begin{proof}
\emph{Bound on the norm.} Let $g \in \Vor(M)$, and let $e_g$ be obtained by rounding the coordinates of $g$ to the nearest integer, so that $\|g-e_g\| \leq \frac 1 2 \sqrt d$. One has $\lambda^- \|g\| \leq \|g\|_M \leq \|g-e_g\|_M \leq \lambda^+ \|g-e_g\|$, where $\lambda^-, \lambda^+$ are respectively the smallest and largest eigenvalues of $M^\frac 1 2$. Thus $\|g\| \leq  (\lambda^+/\lambda^-) \frac 1 2 \sqrt d$ as announced. 
If $e$ is an $M$-Voronoi vector, then there exists $g \in \Vor(M)$ such that $\|g\| = \|e-g\|$, hence $\|e\| \leq 2 \|g\| \leq  \kappa(M) \sqrt d$.

\emph{Coordinates are co-prime.} Consider a vector which coordinates are not co-prime: $k e$, with $k \geq 2$ and $e \in \mZ^d\sm \{0\}$. Then for any $g \in \mR^d$ one has $\|k e-g\|_M^2 + (k-1) \|g\|_M^2 = k \|e-g\|_M^2+(k^2-k)\|e\|_M^2$. Hence $\|e-g\|_M < \max \{\|k e-g\|_M, \|g\|_M\}$ and therefore $k e$ cannot be an $M$-Voronoi vector.
\end{proof}

\begin{corollary}
\label{corol:InclusionEquality}
Let $M \in S_d^+$, and let $E$ be the collection of strict $M$-Voronoi vectors. 
For any set $V \subset \mZ^d$ one has $\Vor(M) \subset \{g \in \mR^d; \, \forall e \in V, \, 2\<g, M e\> \leq \|e\|_M^2\}$, with equality iff $E \subset V$.
\end{corollary}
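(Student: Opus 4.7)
The approach is to rewrite $\Vor(M)$ as an explicit intersection of half-spaces, use Lemma~\ref{lem:VoronoiRadius} to note it is bounded, and then invoke the standard irredundant description of a bounded polytope in terms of its facets. Expanding $\|g-e\|_M^2 = \|g\|_M^2 - 2\<g, Me\> + \|e\|_M^2$, the inequality $\|g\|_M \leq \|g-e\|_M$ is equivalent to $2\<g, Me\> \leq \|e\|_M^2$, so setting $H_e := \{g \in \mR^d;\, 2\<g, Me\> \leq \|e\|_M^2\}$ one obtains $\Vor(M) = \bigcap_{e \in \mZ^d \setminus \{0\}} H_e$. The announced inclusion is then immediate for any $V \subset \mZ^d$.

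For the equivalence, the key fact I would invoke is that $\Vor(M)$ is a bounded convex polytope whose $(d-1)$-dimensional facets are, by the very definition of ``strict,'' exactly the sets $\Vor(M; e)$ for $e \in E$; the classical irredundant facet description then yields $\Vor(M) = \bigcap_{e \in E} H_e$. The sufficient direction follows at once: if $E \subset V$, the sandwich $\Vor(M) \subset \bigcap_{e \in V} H_e \subset \bigcap_{e \in E} H_e = \Vor(M)$ forces equality. For the converse, I would pick $e_0 \in E \setminus V$ and exhibit a point in $\bigcap_{e \in V} H_e \setminus \Vor(M)$ by perturbing a generic point $g_0$ of the relative interior of the facet $\Vor(M; e_0)$ by $g_t := g_0 + t n$, where $n := M e_0 / \|M e_0\|$ is the outward normal. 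For any $t>0$ this violates $H_{e_0}$, hence $g_t \notin \Vor(M)$, whereas a genericity condition ensuring $g_0$ lies in the strict interior of every $H_e$, $e \in V$, guarantees by continuity that $g_t \in H_e$ for all $e \in V$ and $t$ small.

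I expect the main obstacle to be justifying this generic choice of $g_0$. Since the relative interior of $\Vor(M; e_0)$ is $(d-1)$-dimensional, it suffices to avoid the union of the sets $\partial H_e \cap \Vor(M; e_0)$ for $e \in V$, each of which has dimension at most $d-2$: either $e$ is strict and distinct from $e_0$, so $\Vor(M; e) \cap \Vor(M; e_0)$ is a face of $\Vor(M)$ of codimension at least two, or $e$ is non-strict and $\Vor(M; e)$ is itself of dimension at most $d-2$ by definition. Finally the case of infinite $V$ is reduced to the finite one by the Cauchy-Schwarz bound $2\<g, Me\> \leq 2 \|g\|_M \|e\|_M$, which implies $g_t \in H_e$ automatically whenever $\|e\|_M \geq 2 \|g_t\|_M$, so only boundedly many constraints in $V$ are binding.
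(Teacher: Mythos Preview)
Your proof is correct and follows essentially the same route as the paper: rewrite the Voronoi inequality $\|g\|_M \leq \|g-e\|_M$ as the linear constraint $2\<g,Me\> \leq \|e\|_M^2$, use Lemma~\ref{lem:VoronoiRadius} to see that $\Vor(M)$ is a bounded polytope, and then invoke the irredundant facet description to conclude that exactly the half-spaces $H_e$ with $e \in E$ are needed. The paper compresses the converse direction into the single remark that only the strict Voronoi vectors give ``active'' inequalities, whereas you spell this out via an explicit perturbation off a generic facet point; both arguments are standard and equivalent.
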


\begin{proof}
For any $g , e \in \mR^d$, we have the equivalence $\|g\|_M \leq \|g-e\|_M \Leftrightarrow 2\<g,M e\> \leq \|e\|_M^2$, obtained by squaring both sides of the first inequality.
Hence $\Vor(M)$ is a convex polytope, defined by a family of linear inequalities indexed by $e \in \mZ^d$. By Lemma \ref{lem:VoronoiRadius} one can eliminate all inequalities but a finite number. Among these inequalities, only those corresponding to strict $M$-Voronoi vectors are active, in the sense that they define a facet of $\Vor(M)$. The result follows.
\end{proof}

We next identify the volume of a Voronoi cell, and use it to establish our scheme consistency.

\begin{lemma}
\label{lem:VorVol}
For any $M\in S_d^+$ one has $\Leb(\Vor(M))=1$.
\end{lemma}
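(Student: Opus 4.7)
My plan is to reduce the statement to the well-known fact that Voronoi cells of a lattice tile space, with the volume of each cell equal to the lattice covolume.

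First I would perform a linear change of variables to remove the matrix $M$. Setting $A := M^{1/2}$, the identity $\|x\|_M = \|A x\|$ (Euclidean norm) gives
\begin{equation*}
\Vor(M) = A^{-1} \, \Vor_{\mathrm{eucl}}(A\mZ^d),
\end{equation*}
where $\Vor_{\mathrm{eucl}}(A\mZ^d) := \{h \in \mR^d; \, \forall f \in A\mZ^d, \, \|h\| \leq \|h-f\|\}$ is the ordinary Euclidean Voronoi cell of the lattice $\Lambda := A\mZ^d$ at the origin. By the change of variables formula,
\begin{equation*}
\Leb(\Vor(M)) = |\det A^{-1}| \cdot \Leb(\Vor_{\mathrm{eucl}}(\Lambda)) = (\det M)^{-1/2} \cdot \Leb(\Vor_{\mathrm{eucl}}(\Lambda)).
\end{equation*}

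Next I would show $\Leb(\Vor_{\mathrm{eucl}}(\Lambda)) = \mathrm{covol}(\Lambda) = |\det A| = (\det M)^{1/2}$, which will close the argument. For this I would establish the standard tiling property of Euclidean Voronoi cells: the translates $\{\Vor_{\mathrm{eucl}}(\Lambda) + f\}_{f \in \Lambda}$ cover $\mR^d$ (any $h \in \mR^d$ has a nearest lattice point $f_0$, and then $h - f_0 \in \Vor_{\mathrm{eucl}}(\Lambda)$, using that the infimum $\inf_{f \in \Lambda} \|h-f\|$ is attained since $\Lambda$ is discrete), and distinct translates intersect only on their boundaries, which have Lebesgue measure zero (two distinct translates $\Vor_{\mathrm{eucl}}(\Lambda) + f$ and $\Vor_{\mathrm{eucl}}(\Lambda) + f'$ meet only on a subset of the bisecting hyperplane between $f$ and $f'$). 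Thus Lebesgue measure is additive over the tiling, and comparing with any fundamental parallelepiped of $\Lambda$ (which has volume $|\det A|$ and also tiles $\mR^d$ up to translates) gives $\Leb(\Vor_{\mathrm{eucl}}(\Lambda)) = |\det A|$.

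Combining the two displays yields $\Leb(\Vor(M)) = (\det M)^{-1/2} \cdot (\det M)^{1/2} = 1$. The only delicate point is justifying the tiling property, but it is entirely standard; everything else is a routine change of variables, so I expect this lemma to admit a very short proof in the paper as well.
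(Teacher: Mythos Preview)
Your proof is correct and rests on the same idea as the paper's: Voronoi cells of a lattice tile $\mR^d$, so each has volume equal to the lattice covolume. The paper is slightly more direct: it observes that the $\mZ^d$-translates of $\Vor(M)$ already tile $\mR^d$ (since $\Vor(M)$ is the Voronoi cell of the lattice $\mZ^d$ for the norm $\|\cdot\|_M$), hence $\Leb(\Vor(M))$ equals the covolume of $\mZ^d$, namely $1$; your detour through $A=M^{1/2}$ and the Euclidean Voronoi cell of $A\mZ^d$ is correct but unnecessary.
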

\begin{proof}
The set $\Vor(M)$ collects elements $g \in \mR^d$ which are closer to the origin than to any other point $e \in \mZ^d$. As a result the translates of $\Vor(M)$, by all offsets $e \in \mZ^d$, tile the space $\mR^d$ up to a negligible set. Thus  $\Leb(\Vor(M))$ is the co-volume of the lattice $\mZ^d$, namely $1$ as announced.
\end{proof}

\begin{proof}[Proof of Proposition \ref{prop:Consistency} (Consistency)]
Let $M \in S_d^+$, let $x \in X$, and let $u_M$ be the quadratic map \eqref{eqdef:MNorm}.
By construction $\Delta_e u_M (x)= \|e\|_M^2$, for any $e \in \mZ^d$. Hence $\cD_V u_M(x)$ is the volume of: \begin{align*}
\{g \in \mR^d;\, \forall e \in V, \, 2\<g,e\> \leq \|e\|_M^2\} = M \{g \in \mR^d;\, \forall e \in V, \, 2\<g, M e\> \leq \|e\|_M^2\} \supset M \Vor(M).
\end{align*}
where we used Corollary \ref{corol:InclusionEquality} and abusively denoted by $M E := \{ M e ; \, e \in E\}$ the image of a set $E$ by the linear map $M$.
Therefore $\cD_V u_M(x) \geq \det(M) \Leb(\Vor(M)) = \det(M)$ by Lemma \ref{lem:VorVol}. Equality holds iff the above inclusion of polytopes is an equality, equivalently iff $V$ contains all strict $M$-Voronoi vectors by Corollary \ref{corol:InclusionEquality}.
\end{proof}

\begin{proof}[Proof of Corollary \ref{corol:FiniteStencil} (Finite stencils are enough)]
Combine Proposition \ref{prop:Consistency} and Lemma \ref{lem:VoronoiRadius}.
\end{proof}

\subsection{Global convergence of a damped Newton solver}
\label{subsec:Newton}

We establish the convergence of a damped Newton solver for the discrete system \eqref{eq:DiscreteSys}. Before doing so, we need to clarify the implementation of boundary conditions. 
The stencil $V$ is fixed in the following, and obeys the properties of Definition \ref{def:DV}.
\begin{remark}[Boundary discretization]
\label{rem:BoundaryDiscretization}
Maps $u \in \mU$ are in principle, see Definition \ref{def:DiscreteMaps}, defined both on the discrete sampling $X$ of $\Omega$, and on the whole uncountable boundary $\partial \Omega$. 
Fortunately, only finitely many values of $u$ on $\partial \Omega$ play an active role in the system of equations \eqref{eq:DiscreteSys}: those on 
\begin{equation*}
\partial X := \{ x+h_x^e e; \, x \in X, \, e \in V\} \cap \partial \Omega.
\end{equation*}
This discretization, with $\#(X \cup \partial X)$ unkowns, 
is used in our experiments, and in the proof of the damped Newton solver global convergence below.

Alternatively one may (i) not introduce any unknown for the boundary values, but use the given Dirichlet data $\sigma$ as in \cite{Benamou:2014wb}, or (ii) introduce some unknowns on an arbitrary boundary sampling $\partial X'$, extended to $\partial \Omega \sm \partial X'$ by some interpolation procedure. Unfortunately (i) leads to initialization difficulties for the iterative solver, since one needs to find a strictly convex seed $u_0$ with prescribed boundary values on $\partial \Omega$, and (ii) may limit the accuracy of the scheme if first order interpolation is used, or violate its degenerate ellipticity in the case of high order interpolation.
\end{remark}

Our first step is to establish, in Corollary \ref{corol:Invertibility}, that the Jacobian matrix associated to the discrete Monge-Ampere system \eqref{eq:DiscreteSys} is invertible. This follows from the invertibility of diagonally dominant matrices, recalled in the next lemma, and from the degenerate ellipticity of the proposed operator $\cD_V$.

\begin{lemma}
\label{lem:PerronFrobenius}
Let $A$ be an $n \times n$ matrix
such that for each $1 \leq i \leq n$ 
\begin{equation}
\label{eq:DiagonallyDominant}
|A_{ii}| \geq \sum_{j \neq i} |A_{ij}|.
\end{equation}
Assume that for each index $1 \leq i_0 \leq n$ there exists a chain $i_0, \cdots, i_k$ such that $i_k$ satisfies \emph{strictly} inequality \eqref{eq:DiagonallyDominant}, and $A_{i_r, i_{r+1}} \neq 0$ for all $0 \leq r < k$. Then $A$ is invertible. 
\end{lemma}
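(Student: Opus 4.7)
I would argue by contradiction: assume $Ax = 0$ for some $x \neq 0$, set $M := \max_i |x_i| > 0$, and introduce the non-empty ``attainment set'' $S := \{i : |x_i| = M\}$. The whole proof reduces to two claims: (a) $S$ is forward-closed along the directed graph whose edges are $\{(i,j) : A_{ij} \neq 0\}$, and (b) no $i \in S$ can satisfy the strict version of the diagonal dominance inequality \eqref{eq:DiagonallyDominant}. Granting (a) and (b), picking any $i_0 \in S$ and walking the chain $i_0, i_1, \ldots, i_k$ guaranteed by the hypothesis places $i_k$ in $S$, in direct contradiction with $i_k$ being strictly diagonally dominant.

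\textbf{Key step.} Both (a) and (b) will fall out of a single computation. For $i \in S$, the $i$-th row of $Ax = 0$ gives $A_{ii} x_i = - \sum_{j \neq i} A_{ij} x_j$, and $|x_i| = M$ yields the chain
\[
|A_{ii}|\, M \;=\; \Bigl|\sum_{j \neq i} A_{ij}\, x_j\Bigr| \;\leq\; \sum_{j \neq i} |A_{ij}|\,|x_j| \;\leq\; M \sum_{j \neq i} |A_{ij}| \;\leq\; M\, |A_{ii}|,
\]
forcing equality throughout. Equality between the two outermost terms gives (b). The middle equality $\sum_{j \neq i} |A_{ij}|\,|x_j| = M \sum_{j \neq i} |A_{ij}|$ rewrites as $\sum_{j \neq i} |A_{ij}|\,(M - |x_j|) = 0$, a sum of non-negative terms; hence $A_{ij} \neq 0 \Rightarrow |x_j| = M \Rightarrow j \in S$, which is (a). A one-line induction along the chain then delivers the contradiction.

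\textbf{Main difficulty.} There is no substantial obstacle here: this is the classical Perron--Frobenius style argument for irreducibly diagonally dominant matrices, slightly weakened to accommodate the ``chain to a strictly dominant row'' hypothesis in place of true irreducibility. The only point deserving attention is that one must exploit \emph{both} equality cases in the chain above: the triangle inequality $|\sum_j A_{ij} x_j| \leq \sum_j |A_{ij}||x_j|$ alone would only propagate $S$ along entries with aligned signs, whereas the subsequent inequality $\sum_j |A_{ij}||x_j| \leq M \sum_j |A_{ij}|$ is what guarantees propagation to \emph{every} neighbor $j$ with $A_{ij} \neq 0$, and it is this stronger propagation that is needed to reach $i_k$ via the chain hypothesis.
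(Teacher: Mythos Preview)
Your proof is correct and follows essentially the same route as the paper's. The only cosmetic difference is organizational: the paper selects, among indices maximizing $|x_i|$, one with minimal chain length $k(i)$ and derives an immediate contradiction, whereas you phrase the same content as ``$S$ is forward-closed'' and then walk the chain by induction; the underlying inequality chain and the analysis of its equality cases are identical.
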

\begin{proof}
For each index $1 \leq i \leq n$, let $k(i)$ denote the length of the smallest chain of indices as above.
Let $x \in \mR^d$ be such that $A x = 0$. Among the indices $1 \leq i \leq n$ such that the vector component $|x_i|$ is maximal, choose one which minimizes $k(i)$.
From $(Ax)_i = 0$ we obtain 
\begin{equation*}
|A_{ii}| |x_i| \leq \sum_{j \neq i} |A_{ij}| |x_j|,
\quad \text{ thus }
(|A_{ii}|-\sum_{j \neq i} |A_{ij}|) |x_i| + \sum_{j\neq i} |A_{ij}| (|x_i| - |x_j|)  = U+V \leq 0.
\end{equation*}
Both terms $U$ and $V$ are non-negative by construction, hence $U=V=0$. If $k(i)=0$, then from $U=0$ we obtain $|x_i|=0$, hence $x=0$. If $k(i)\neq 0$, then from $V=0$ we obtain $|x_j| = |x_i|$ for all indices $1 \leq j \leq n$ such that $A_{i j} \neq 0$. One of these indices satisfies $k(j) = k(i)-1$, which contradicts our choice of $i$. This concludes the proof.
\end{proof}

Let $\mU_0$ be the collection of maps $u : X \cup \partial X \to \mR$ such that: 
\begin{equation}
\label{eqdef:U0}
\forall x \in X,\ \cD_V u(x) > 0. \qquad \text{ (Equivalently: } \forall x\in X, \ \forall e \in V, \ \Delta_e u(x) > 0.)
\end{equation}
Let $f : \mU_0 \to \mR^{X \cup \partial X}$ be defined by 
\begin{equation}
\label{eqdef:fux}
f(u)(x) := 
\begin{cases}
\ln \cD_V u(x)& \text{ if } x \in X,\\ 
u(x)& \text{ if } x \in \partial X.
\end{cases}
\end{equation}

\begin{corollary}
\label{corol:Invertibility}
For each $u \in \mU_0$, the Jacobian matrix $df(u)$ is invertible.
\end{corollary}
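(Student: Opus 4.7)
The plan is to apply Lemma \ref{lem:PerronFrobenius} to $A = df(u)$, indexed over $X \cup \partial X$. I will verify weak diagonal dominance \eqref{eq:DiagonallyDominant} of every row, and exhibit for each index a chain of nonzero off-diagonal entries leading to a row where the inequality is strict. The main obstacle will be the chain condition, since weak diagonal dominance alone is insufficient and one must certify strict positivity of off-diagonal entries along a specific walk.

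For diagonal dominance, rows indexed by $x \in \partial X$ are trivial since $f(u)(x) = u(x)$ yields the indicator row, which is strictly dominant. For $x \in X$, the chain rule gives
\begin{equation*}
df(u)(x, y) = \frac{1}{\cD_V u(x)} \sum_{e \in V} w_e(x) \cdot \frac{\partial \Delta_e u(x)}{\partial u(y)},
\end{equation*}
where the weights $w_e(x) \geq 0$ are the partial derivatives of $\cD_V u(x)$ with respect to the second order differences, non-negative by the degenerate ellipticity discussed after Definition \ref{def:DV}. A direct calculation from \eqref{eqdef:DeltaBoundary} shows that the coefficient of $u(x)$ in $\Delta_e u(x)$ is $-2/(h_x^e h_x^{-e})$, whose absolute value exactly equals the sum of the positive coefficients of $u(x+h_x^e e)$ and $u(x-h_x^{-e} e)$. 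Hence each $\Delta_e u(x)$ is diagonally dominant with equality, and this property passes to any non-negative combination; dividing by $\cD_V u(x) > 0$ preserves it.

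For the chain condition, I must connect every $x_0 \in X$ to a row of $\partial X$, where strict dominance holds, via a walk in the graph of nonzero off-diagonal entries of $A$. Observe that the polytope $P(x) = \{g \in \mR^d; \, \forall e \in V, \, 2\<g,e\> \leq \Delta_e u(x)\}$ is centrally symmetric (since $V$ is symmetric and $\Delta_e u(x) = \Delta_{-e} u(x)$), bounded (since $V$ spans $\mR^d$), and of positive $d$-dimensional volume (since $u \in \mU_0$ places $0$ in its interior). Therefore the normals of its active facets, namely $\{e \in V; \, w_e(x) > 0\}$, span $\mR^d$. In particular, there exists $e \in V$ with $w_e(x) > 0$ and first coordinate $e_1 > 0$. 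Setting $x_1 := x_0 + h_{x_0}^e e \in X \cup \partial X$, the first coordinate strictly increases, $(x_1)_1 > (x_0)_1$, and $df(u)(x_0, x_1) > 0$ because all summands contributing to this derivative share the same non-negative sign and at least one (associated with this chosen $e$) is strictly positive. Iterating, the sequence $x_0, x_1, \ldots$ has strictly increasing first coordinate, so it cannot remain inside the finite set $X$ and must eventually reach a vertex in $\partial X$, completing the chain required by Lemma \ref{lem:PerronFrobenius}.
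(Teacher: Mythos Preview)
Your proof is correct and follows essentially the same strategy as the paper: verify weak diagonal dominance from degenerate ellipticity, observe that boundary rows are strictly dominant, and build chains to $\partial X$ using that the active facet normals of the (bounded, symmetric, full-dimensional) polytope span $\mR^d$. The only difference is cosmetic: the paper walks toward the boundary by choosing at each step a neighbor strictly closer to $\partial\Omega$, whereas you walk by strictly increasing the first coordinate; both monotone quantities guarantee termination in the finite set $X$.
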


\begin{proof}
Let $u \in \mU_0$ and let $A := df(u)$, which formally is a matrix associating a coefficient to each index pair $(x,y)$, $x,y\in X \cup \partial X$. The line of $A$ corresponding to each $x \in \partial X$ has a single non-zero coefficient, namely $1$  at index $(x,x)$, hence this line satisfies \eqref{eq:DiagonallyDominant} strictly. By degenerate ellipticity of $\cD_V$, the line corresponding to any $x \in X$ satisfies \eqref{eq:DiagonallyDominant}. 
We prove in the following the chain property of Lemma \ref{lem:PerronFrobenius}, which implies the announced invertibility.

Let $x \in X$, and let $K_x$ be the polytope appearing in \eqref{eqdef:LBR}, which by construction is convex, compact and symmetric w.r.t the origin. 
For each $e \in V$ let $F_e$ denote the facet of $K_x$ defined by the equality constraint $2 \<g,e\> = \Delta_e u(x)$. Let $V_x := \{e \in V; \, \Lambda_e>0\}$ where $\Lambda_e$ denotes the $(d-1)$-dimensional measure of $F_e$. Since the polytope is symmetric, one has $\Lambda_e = \Lambda_{-e}$ for all $e \in V$, hence $V_x$ is symmetric. Since the polytope is compact, and since the exterior normal to $F_e$ is $e/\|e\|$, the set $V_x$ spans $\mR^d$. The coefficient of the Jacobian matrix $A$ at index $(x,x+ h_x^e e)$ is $4 \Lambda_e /(h_x^e (h_x^e+h_x^{-e}) \|e\| \Leb(K_x) )$, using \eqref{eqdef:DeltaBoundary} and the geometric argument that the polytope volume variation is at first order given by the facets areas $\Lambda_e + \Lambda_{-e}$ times their normal displacement. This coefficient is hence positive if $e \in V_x$.

Among the points $x+h_x^e e$, $e \in V_x$ one at least is strictly closer to $\partial \Omega$ than $x$. By induction we can thus build a finite chain $x=x_0, \cdots, x_{k-1} \in X$ such that $x_k \in \partial X$ and the coefficient of $A$ at index $(x_r,x_{r+1})$ is non-zero for each $0 \leq r<k$. The announced result then follows from Lemma \ref{lem:PerronFrobenius}.
\end{proof}

Our second step is to show that \eqref{eqdef:fux} is a proper map: the preimage of any compact set is a compact set, a property which is tightly linked with the well-posedness of the PDE \eqref{eq:MAD}. 
Let 
\begin{equation}
\label{eqdef:Subgradient}
\partial_x U := \{g \in \mR^d; \, \forall y \in \overline \Omega, \, \<g,y-x\> \leq U(y)-U(x)\}
\end{equation}
denote the subgradient of a convex map $U\in C^0(\overline \Omega, \mR)$ at a point $x \in \Omega$. 
We connect in Lemma \ref{lem:SubgradientInclusion} the proposed operator $\cD_V u(x)$ with the Lebesgue measure of  $\partial_x U$, where $U$ is the lower convex envelope of $u$. The properness of \eqref{eqdef:fux} then follows in Corollary \ref{corol:Proper} from the maximum principle of Alexandroff-Bakelman-Pucci.

\begin{lemma}
\label{lem:SubgradientInclusion}
Let $u \in \mU_0$, and let $U : \Hull(X \cup \partial X) \to \mR$ be the maximal convex map bounded above by $u$. Then $\Leb( \partial_x U ) \leq h_*^d \cD_V u(x)$ for all $x \in X$, with 
$h_* := \max\{h_x^e; \, x\in X, e \in V\}$. 
\end{lemma}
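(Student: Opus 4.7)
The plan is to split into two cases according to whether $U(x)=u(x)$: the contact case is handled by a subgradient--polytope enclosure followed by a Brunn--Minkowski inequality, and the remaining case is dispatched by a dimension count on the convex envelope.

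\textbf{Degenerate case.} If $U(x)<u(x)$, I will use the standard characterization of $U$ as the lower convex envelope of the lifted sample $\{(y,u(y)):y\in X\cup\partial X\}$. The strict inequality forces $(x,U(x))$ to lie in the relative interior of a positive-dimensional face of this hull, so that $x=\sum\lambda_i z_i$ with $\lambda_i>0$ for some distinct $z_i\in X\cup\partial X$ with $z_i\neq x$, and $U$ is affine on $\Conv\{z_i\}$ with some gradient $g_0$. In particular, for any direction $w$ in the affine span of $\{z_i-x\}$, $U$ is affine on a bilateral segment through $x$, and convexity forces $\<g,w\>=\<g_0,w\>$ for every $g\in\partial_x U$. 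Hence $\partial_x U$ lies in a proper affine subspace of $\mR^d$, and $\Leb(\partial_x U)=0$.

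\textbf{Contact case, polytope enclosure.} Suppose $U(x)=u(x)$. For any $g\in\partial_x U$, the affine function $L(y):=U(x)+\<g,y-x\>$ is a minorant of $U$ (hence of $u$ on $X\cup\partial X$) and equals $u(x)$ at $x$. Evaluating $L\leq u$ at $y=x\pm h^{\pm}e$ for $e\in V$ yields
\[\<g,e\>\leq a_e:=\frac{u(x+h^+e)-u(x)}{h^+},\qquad -\<g,e\>\leq b_e:=\frac{u(x-h^-e)-u(x)}{h^-},\]
so that $\partial_x U\subset\tilde K:=\{g\in\mR^d:-b_e\leq\<g,e\>\leq a_e\text{ for all }e\in V\}$. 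The width estimate
\[a_e+b_e=\frac{h^++h^-}{2}\Delta_e u(x)\leq h_*\Delta_e u(x)\]
is immediate from the definition \eqref{eqdef:DeltaBoundary} of $\Delta_e u(x)$.

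\textbf{Brunn--Minkowski step.} The difference body $\tilde K-\tilde K$ is centrally symmetric and contained in $\{h\in\mR^d:|\<h,e\>|\leq a_e+b_e\text{ for all }e\in V\}\subset 2h_*K_x(u)$, where $K_x(u):=\{g:2\<g,e\>\leq\Delta_e u(x)\ \forall e\in V\}$ is the polytope defining $\cD_V u(x)$ in \eqref{eqdef:LBR} (the symmetry of $V$ converts the absolute values into linear constraints, at the cost of the factor of two in the scaling). Applying Brunn--Minkowski to $\tilde K$ and $-\tilde K$ gives $\Leb(\tilde K-\tilde K)\geq 2^d\Leb(\tilde K)$, whence
\[\Leb(\partial_x U)\leq\Leb(\tilde K)\leq 2^{-d}\Leb(\tilde K-\tilde K)\leq 2^{-d}(2h_*)^d\cD_V u(x)=h_*^d\cD_V u(x).\]
The main obstacle I anticipate is the degenerate case: one has to carefully tie the strict gap $U(x)<u(x)$ to affine flatness of $U$ in a non-trivial direction through $x$, so as to collapse the Lebesgue measure of $\partial_x U$. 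The contact case is then a routine combination of the subgradient inequality and Brunn--Minkowski once the polytope enclosure is set up.
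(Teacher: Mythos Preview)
Your proof is correct and follows essentially the same route as the paper's: split on whether $U(x)=u(x)$, in the contact case use the subgradient inequality at the neighbors $x\pm h^{\pm}e$ to enclose $\tfrac12(\partial_x U-\partial_x U)$ in $h_*K_x$, and conclude via Brunn--Minkowski. The only difference is cosmetic---you pass through the intermediate polytope $\tilde K$ and give more detail in the degenerate case (the paper simply asserts $\Leb(\partial_x U)=0$ when $U(x)<u(x)$)---but the argument is identical in substance.
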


\begin{proof}
By construction $U \leq u$, and for any $x \in X$ such that $U(x)<u(x)$ one has $\Leb( \partial_x U ) = 0$, so that the announced inequality holds.
Assume that $U(x) = u(x)$, and let $g^+, g^- \in \partial_x U$. Let $e \in V$ and let $h^+ := h_x^e$, $h^- := h_x^{-e}$ as in \eqref{eqdef:DeltaBoundary}. One has
$\<g^+, h^+ e\> \leq u(x+h^+ e)-u(x)$, and $\<g^-, - h^- e\> \leq u(x-h^- e) - u(x)$ by \eqref{eqdef:Subgradient}, thus 
\begin{equation*}
\<g^+ - g^-, e\> \leq \frac {u(x+h^+ e) - u(x)}{h^+} + \frac {u(x-h^- e)-u(x)}{h^-} = \frac {h^++h^-} 2 \Delta_e u(x) \leq h_* \Delta_e u(x).
\end{equation*}
Therefore $\frac 1 2 (\partial_x U - \partial_x U) \subset h_* K_x$, where the left hand side is a Minkowski sum of sets, and $K_x$ is the polytope appearing in \eqref{eqdef:LBR}. The announced estimate then follows from this inclusion and Brunn-Minkowski's inequality.
\end{proof}

\begin{theorem}[Alexandroff-Bakelman-Pucci's maximum principle \cite{Gutierrez:2001wq}]
\label{th:MinMin}
Let $u \in C^0(\overline \Omega)$, and let $U$ be the maximal convex map bounded above by $u$. Then 
\begin{equation}
\label{eq:MinMin}
\min_{\partial \Omega} u -\min_{\overline \Omega} u \leq  (\Leb(G)/\omega_d)^\frac 1 d \diam(\Omega), 
\qquad \text{ with }
G:= \bigcup_{\substack{x \in \Omega\\ u(x) = U(x)}} \partial_x U,
\end{equation}
where $\diam(\Omega) := \max \{\|x-y\|; x, y \in \Omega\}$,
and $\omega_d$ is the volume of the $d$-dimensional unit ball.
\end{theorem}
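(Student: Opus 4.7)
The plan is to prove the classical Alexandroff--Bakelman--Pucci estimate by the standard ``lifting affine functions from below'' argument, showing that the ball of radius $(\min_{\partial\Omega} u - \min_{\overline\Omega} u)/\diam(\Omega)$ in $\R^d$ is contained in the subgradient set $G$, and then comparing volumes. If $m:=\min_{\overline\Omega} u \geq \min_{\partial \Omega} u =: M$ the inequality is trivial, so we assume $M-m > 0$, in which case the minimum $m$ is attained at some interior point $x_0 \in \Omega$.

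The key claim I would establish is: for every vector $g \in \R^d$ with $\|g\| < (M-m)/\diam(\Omega)$, there exists a point $x^* \in \Omega$ with $U(x^*)=u(x^*)$ and $g \in \partial_{x^*} U$. To prove this, I would introduce the affine function $\ell(y) := m + \langle g, y-x_0\rangle$, so that $\ell(x_0) = u(x_0)$, and on $\partial \Omega$ we have $\ell(y) \leq m + \|g\|\diam(\Omega) < M \leq u(y)$. Translating $\ell$ by the constant $c^* := \min_{\overline\Omega}(u-\ell) \leq 0$ yields $\tilde\ell := \ell + c^* \leq u$ on $\overline \Omega$, with equality at some minimizer $x^*$, which by the strict boundary inequality lies in the interior $\Omega$.

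Since $\tilde \ell$ is affine with $\tilde\ell \leq u$, the definition of the lower convex envelope gives $\tilde \ell \leq U \leq u$; evaluating at $x^*$ forces $U(x^*)=u(x^*)$, and then $\tilde\ell$ is a supporting affine function of $U$ at $x^*$, i.e.\ $g \in \partial_{x^*} U$. Hence the open ball of radius $(M-m)/\diam(\Omega)$ around $0$ is contained in $G$, and taking Lebesgue measure yields
\begin{equation*}
\omega_d \Bigl(\frac{M-m}{\diam(\Omega)}\Bigr)^d \leq \Leb(G),
\end{equation*}
which rearranges to the announced bound. The only delicate point, and thus the part I would be most careful with, is verifying that the touching point $x^*$ really lies in the \emph{open} domain (so that the subgradient inclusion is meaningful for points contributing to $G$); this is precisely where the strict inequality $\|g\| < (M-m)/\diam(\Omega)$ is used, and a density/limit argument then recovers the non-strict inequality in the statement.
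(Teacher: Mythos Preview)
Your argument is the classical proof of the Alexandroff--Bakelman--Pucci estimate and is correct as written; the only thing to note is that the paper does not actually prove Theorem~\ref{th:MinMin} at all---it is stated as a known result with a reference to \cite{Gutierrez:2001wq}, so there is no ``paper's own proof'' to compare against. Your proof is essentially the one found in that reference.
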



\begin{corollary}
\label{corol:Proper}
The map $f : \mU_0 \to \mR^{X \cup \partial X}$ is proper.  
\end{corollary}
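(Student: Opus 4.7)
The plan is to verify that for any compact $K \subset \mR^{X \cup \partial X}$, every sequence $(u_n) \subset f^{-1}(K)$ admits a subsequence converging in $\mU_0$ to some $u \in f^{-1}(K)$. Since $f$ is manifestly continuous (the value $\cD_V u(x)$ is, by \eqref{eqdef:LBR}, a continuous function of the finitely many second order differences involved), and since the boundedness of $\ln \cD_V u_n(x)$ on $X$ combined with continuity of $\cD_V$ will force any limit to satisfy $\cD_V u(x) > 0$ (hence lie in $\mU_0$), the task reduces to proving that such a sequence is bounded in $\mR^{X \cup \partial X}$. Extracting a subsequence, assume $f(u_n) \to y \in K$: the boundary values $u_n|_{\partial X} = f(u_n)|_{\partial X}$ are directly bounded, and $|\ln \cD_V u_n(x)|$ is uniformly bounded on $X$; it remains to bound $u_n|_X$ above and below.

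The upper bound follows from a discrete maximum principle. If $u_n$ attained its maximum on $X\cup \partial X$ at a point $x^* \in X$, then for each $e \in V$ one would have $u_n(x^* + h_{x^*}^e e) \leq u_n(x^*)$ and $u_n(x^* - h_{x^*}^{-e} e) \leq u_n(x^*)$, forcing both terms in the bracket of \eqref{eqdef:DeltaBoundary} to be non-positive, which contradicts $\Delta_e u_n(x^*) > 0$. Hence $\max_X u_n \leq \max_{\partial X} u_n$, which is bounded.

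The lower bound is the main obstacle, and is where Lemma \ref{lem:SubgradientInclusion} and Theorem \ref{th:MinMin} combine. I would extend $u_n$ to a continuous function $\tilde u_n$ on $\overline{\Omega'} := \overline{\Hull(X \cup \partial X)}$ by piecewise linear interpolation on a triangulation whose nodes are $X \cup \partial X$; this extension preserves the maximal convex envelope $U_n$ of Lemma \ref{lem:SubgradientInclusion}. Since $U_n$ is piecewise linear with breakpoints in $X \cup \partial X$, the set $G_n$ appearing in \eqref{eq:MinMin} is contained in the union of subgradients $\partial_x U_n$ at breakpoints lying in $X$, and Lemma \ref{lem:SubgradientInclusion} bounds each by $h_*^d \cD_V u_n(x)$; hence $\Leb(G_n)$ is controlled uniformly by the already-established bound on $\cD_V u_n$. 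The AB-P inequality of Theorem \ref{th:MinMin} then gives
\begin{equation*}
\min_X u_n \,\geq\, \min_{\overline{\Omega'}} \tilde u_n \,\geq\, \min_{\partial \Omega'} \tilde u_n \;-\; (\Leb(G_n)/\omega_d)^{1/d}\,\diam(\Omega'),
\end{equation*}
and $\min_{\partial \Omega'} \tilde u_n$ equals, by piecewise linearity, the minimum of $u_n$ over the vertices of $\Omega'$, which in the natural setup lie in $\partial X$ and are thus bounded (the pathological case of a vertex in $X$ is absorbed by combining this estimate with the discrete maximum principle of the previous step).

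Closing the argument, Bolzano--Weierstrass extracts a convergent subsequence $u_n \to u$; continuity of $\cD_V$ forces $\cD_V u(x) = e^{y(x)} > 0$ so that $u \in \mU_0$, and continuity of $f$ yields $f(u) = y \in K$, establishing properness.
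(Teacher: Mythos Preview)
Your proof is correct and follows essentially the same route as the paper's: the upper bound via the discrete maximum principle is identical, and the lower bound combines Lemma~\ref{lem:SubgradientInclusion} with the Alexandroff--Bakelman--Pucci estimate (Theorem~\ref{th:MinMin}) exactly as the paper does. The only cosmetic difference is the continuous extension used to invoke Theorem~\ref{th:MinMin}: the paper takes $u'(z):=\min_{x\in X\cup\partial X}u(x)+\lambda\|x-z\|$ for large $\lambda$, whereas you use a piecewise-linear interpolant on a triangulation with node set $X\cup\partial X$; both yield the same convex envelope $U$ and the same bound on $\Leb(G)$. (Your caveat about a possible vertex of $\Omega'$ lying in $X$ is in fact unnecessary: each $x\in X$ is a strict convex combination of $x+h_x^e e$ and $x-h_x^{-e}e$, both in $X\cup\partial X$, so every extreme point of $\Hull(X\cup\partial X)$ lies in $\partial X$.)
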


\begin{proof}
Since $f$ is continuous, it suffices to bound by above and below the values of an arbitrary $u \in \mU_0$ in terms of those of $f(u)$. We have the obvious estimate $f(u)(x)=u(x)$ for values at $x \in \partial X$.

\emph{Lower bound on $X$.} 
Let $\Omega'$ be the interior of $\Hull(X \cup \partial X)$, and let $u'\in C^0( \overline \Omega', \mR)$ be defined by $u'(z) = \min_{x \in X\cup \partial X} u(x)+ \lambda \|x-z\|$, where $\lambda>0$. Let $U \in C^0(\overline \Omega', \mR)$ be the maximal convex function bounded above by $u'$. If the constant $\lambda$ is sufficiently large, then $U$ also is the maximal convex function bounded above by $u$. In addition for all $x \in \Omega'$, one has $U(x) = u'(x) \Leftrightarrow x \in X\cup \partial X$. 
We obtain applying Theorem \ref{th:MinMin} the desired lower bound on $\min\{ u(x); \, x\in X\}$, since
\begin{align*}
\min_{\overline {\Omega'}} u' &= \min_{X \cup \partial X} u, & 
\min_{\partial \Omega'} u' &= \min_{\partial X} u, & 
\Leb(G) & = \sum_{x \in X} \Leb(\partial_x U) \leq h_*^d \sum_{x \in X} \cD_V u(x) = h_*^d \sum_{x \in X} e^{f(u)(x)}.
\end{align*}

\emph{Upper bound on $X$.} Let $x \in X \cup \partial X$ be such that $u(x)$ is maximal, and let us assume for contradiction that $x \notin \partial X$. 
Since $\Delta_e u(x) > 0$, for any $e \in V$, one has either $u(x+h^+ e) > u(x)$, or $u(x-h^- e) > u(x)$, with the notations of \eqref{eqdef:DeltaBoundary}. This contradicts the maximality of $u(x)$, and concludes the proof.
\end{proof}

Numerous variants of the Newton method exist \cite{Eisenstat:1994wk}; an elementary one is presented below for completeness, which guarantees global convergence for the system \eqref{eq:DiscreteSys} of interest.
This damped Newton algorithm is an iterative equation solver, which recursion rule \eqref{eqdef:NewtonUpdate} involves an adaptively chosen ``damping'' parameter $\delta$. In practice $\delta$ is typically small in the first iterations, and equal to $1$ in the last ones, which coincide with the classical Newton method and enjoy quadratic convergence. Convergence is guaranteed for maps which, like \eqref{eqdef:DeltaBoundary}, are shown to be proper and at each point a local diffeomorphism.
Note that these assumptions, plus the connectedness of the source domain and the simple connectedness of the target domain (here both satisfied), imply by Hadamard-Levy's theorem that $f$ is a global diffeomorphism.



\begin{proposition}[Global convergence of the damped Newton algorithm]
\label{prop:NewtonConvergence}
Let $N>0$, let $\mU_0 \subset \mR^N$ be an open set, and let $f \in C^1(\mU_0, \mR^N)$. Assume that $f$ is proper and that the Jacobian matrix $F(x) := df(x)$ is invertible for each $x \in \mU_0$.
Consider $y \in \mR^N$, and for each $x \in \mU_0$, $\delta \in [0,1]$, the Newton update
\begin{equation}
\label{eqdef:NewtonUpdate}
\Newton(x,\delta) := x+\delta F(x)^{-1}(y-f(x)).
\end{equation}
Let $x_0 \in \mU_0$, and for each $n \geq 0$ let $x_{n+1} := \Newton(x_n,\delta_n)$, where $\delta_n = 2^{-k_n}$ and $k_n$ is the smallest non-negative integer such that $\|y-f(x_{n+1})\| \leq (1-\delta_n/2)\|y-f(x_n)\|$.
Then $f(x_n) \to y$ as $n\to \infty$.
\end{proposition}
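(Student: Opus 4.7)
The plan is to show three things in turn: (a) each backtracking step terminates so the iterates are well defined, (b) the sequence $\{x_n\}$ stays in a fixed compact subset of $\mU_0$, and (c) on that compact set the damping parameter $\delta_n$ is bounded away from zero, forcing $r_n := \|y-f(x_n)\|$ to zero at a geometric rate. The central tool is the first order Taylor expansion of $f$ along the Newton direction: with $v(x) := F(x)^{-1}(y-f(x))$ one has $F(x) v(x) = y-f(x)$, hence
\begin{equation*}
y - f(x+\delta v(x)) \;=\; (1-\delta)(y-f(x)) \;-\; R(x,\delta),
\qquad
R(x,\delta) := \int_0^\delta \bigl[df(x+s v(x))-df(x)\bigr]\,v(x)\,ds.
\end{equation*}
The crucial observation is that $\|v(x)\| \leq \|F(x)^{-1}\| \cdot \|y-f(x)\|$, so the remainder carries an extra factor of $\|y-f(x)\|$, which is exactly what is needed to beat the right hand side $(1-\delta/2)\|y-f(x)\|$ of the acceptance criterion.

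For step (a), at a fixed $x \in \mU_0$ with $y \neq f(x)$, the above expansion gives $\|y-f(x+\delta v(x))\| \leq (1-\delta)\|y-f(x)\| + \|R(x,\delta)\|$ with $\|R(x,\delta)\|/\delta \to 0$ as $\delta\to 0^+$; hence the acceptance inequality holds for all sufficiently small $\delta$, and since $\mU_0$ is open the iterate remains admissible. Therefore some $k_n<\infty$ exists. If $y=f(x_n)$ the algorithm has already succeeded.

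For step (b), since the sequence $r_n$ is non-increasing by construction, $\{f(x_n)\}$ lies in the closed ball $\overline B(y, r_0)$, and by properness of $f$ its preimage under $f$ is compact; call this compact set $K\subset\mU_0$. All iterates $x_n$ lie in $K$. On $K$, the quantities $M := \sup_{x \in K} \|F(x)^{-1}\|$ and $D := \sup_{x \in K} \|y-f(x)\| = r_0$ are finite, hence $\|v(x)\| \leq MD$ on $K$. Choose $\delta_0>0$ small enough that the $\delta_0 MD$-neighborhood $K'$ of $K$ still lies in $\mU_0$; since $df$ is continuous on the compact set $K'$ it admits a modulus of continuity $\omega$ with $\omega(\epsilon)\to 0$ as $\epsilon\to 0^+$, and for $\delta \in [0,\delta_0]$ one estimates
\begin{equation*}
\|R(x,\delta)\| \;\leq\; \delta \,\|v(x)\|\, \omega(\delta\|v(x)\|)
\;\leq\; \delta\, M\,\|y-f(x)\|\, \omega(\delta M D).
\end{equation*}
Thus the acceptance condition is guaranteed as soon as $M\,\omega(\delta M D) \leq 1/2$, which holds uniformly in $x\in K$ for all $\delta \leq \delta_*$ for some $\delta_*\in(0,\delta_0]$ depending only on $K$.

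For step (c), the smallest dyadic $\delta_n = 2^{-k_n}$ satisfying the acceptance criterion therefore obeys $\delta_n \geq \delta_*/2$ for every $n$. Consequently $r_{n+1} \leq (1-\delta_*/4)\,r_n$, so $r_n \to 0$ geometrically, i.e.\ $f(x_n)\to y$, as announced. The main obstacle, and the reason the proof works at all, is the sharpening in step (c): without the factor $\|y-f(x)\|$ pulled from $v(x)$ through the invertibility of $F$ on the compact set $K$, the Taylor remainder would only be $o(\delta)$ rather than $o(\delta)\cdot\|y-f(x)\|$, and one could not rule out $\delta_n\to 0$ with $r_n$ stagnating at a positive limit.
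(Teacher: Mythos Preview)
Your proof is correct and follows essentially the same route as the paper's: Taylor expand $f$ along the Newton direction to get $y-f(\Newton(x,\delta)) = (1-\delta)(y-f(x)) + o(\delta)\|y-f(x)\|$, then use properness to confine the iterates to a compact set $K$ on which the $o(\delta)$ is uniform, yielding a uniform lower bound on $\delta_n$ and hence geometric decay of $\|y-f(x_n)\|$. You are more explicit than the paper where it simply invokes ``by compactness, this property holds \ldots with a uniform open range $]0,\lambda[$'': you actually construct the compact neighborhood $K'\subset\mU_0$ and the modulus of continuity $\omega$ of $df$ on it, which is precisely what justifies that uniformity and also ensures the intermediate points $x+\delta v(x)$ stay in the domain of $f$.
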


\begin{proof}
Introduce the set $K := \{x \in \mU_0; \, \|y - f(x)\| \leq \|y-f(x_0)\|\}$, which is compact since $f$ is proper. For any $x \in \mU_0$ one has the Taylor expansion: for small $\delta$
\begin{align}
\nonumber
f(\Newton(x,\delta)) &= f(x) + \delta F(x) F(x)^{-1} (y-f(x)) + o(\delta \|F(x)^{-1}(y-f(x))\|)\\
\label{eq:NewtonNorm}
&= y - (1-\delta) (y-f(x)) + o(\delta \|y-f(x)\|).
\end{align}
Hence 
$\|y - f(\Newton(x,\delta))\| = (1-\delta + o(\delta)) \|y-f(x)\|$ using \eqref{eq:NewtonNorm}, which is smaller than $(1-\delta/2) \|y-f(x)\|$ for an open range of $\delta \in ]0, \lambda(x)[$. By compactness, this property holds  for all $x \in K$ with an uniform open range $]0, \lambda[$.

Thus $\delta_0$ is well defined, bounded below by $\lambda/2$, and by construction $\|y-f(x_1)\| \leq (1-\lambda/2) \|y-f(x_0)\|$ so that $x_1 \in K$. The result follows from an immediate induction argument.
\end{proof}

\subsection{Convergence in the setting of viscosity solutions}
\label{subsec:Viscosity}

In this section, we let the discretization grid scale tend to zero, and study the convergence of the minimizers to the discrete problems \eqref{eq:DiscreteSys}.
For all integers $n \geq 1$, let $X_n := \Omega \cap n^{-1} \mZ^d$.
Let $\mU_n$ be the collection of (semi-)discrete maps $u : X_n \cup \partial \Omega \to \mR$. 
The stencil $V$ is fixed and obeys the assumptions of Definition \ref{def:DV}.
For any $u \in \mU_n$, $x \in X_n$, and $M \in S_d$ let
\begin{align}
\nonumber
	\cD_n(u) &:= \Leb\{g \in \mR^d; \, \forall e \in V, \ 2\<g,e\> \leq \Delta_{e/n} u(x)\},\\
	\label{eqdef:DM}
	D(M) &:= \Leb\{g \in \mR^d; \, \forall e \in V, \ 2\<g,e\> \leq  \<e,Me\>\}. 
\end{align}
We denoted by $\Delta_{e/n} u(x) := n^2 (u(x+e/n) - 2 u(x)+u(x-e/n))$ the standard approximation of $\<e, (\nabla^2 u(x)) e\>$ on the grid $X_n$ (This expression assumes that  $x\pm e/n \in X_n$, and should be modified as in \eqref{eqdef:DeltaBoundary} otherwise).
Given a density $\rho \in C^0(\overline \Omega, \mR_+^*)$, and some Dirichlet data $\sigma \in C^0(\partial \Omega, \mR)$, we study the problems
\begin{align}
\label{eqdef:Pbn}
	\begin{cases}
		u \in \mU_n,\\
		\cD_n(u) = \rho & \text{ on } \Omega,\\
		u = \sigma & \text{ on } \partial \Omega.
	\end{cases}
& &
	\begin{cases}
		u : \overline \Omega \to \mR, \\ 
		D(\nabla^2 u) = \rho & \text{ on }\Omega,\\
		u = \sigma & \text{ on } \partial \Omega.
	\end{cases}
\end{align}
By \S \ref{subsec:Newton}, the discretized problem (\ref{eqdef:Pbn}, left) has a unique solution $u_n \in \mU_n$. We show in Corollary \ref{corol:ExistenceContinuous} that (\ref{eqdef:Pbn}, right) also admits a unique solution $u_\infty$.  Importantly, if $\nabla^2 u_\infty$ exists at each $x \in \Omega$ and belongs to the consistency set of $\cD_V$, then $u_\infty$ is also the unique solution to the Monge-Ampere equation \eqref{eq:MAD}. 
The study of (\ref{eqdef:Pbn}, right) relies on the concept of viscosity solutions \cite{Crandall:1992kn}.

\begin{definition}
\label{def:SubSup}
	A sub-solution (resp.\ super-solution) of (\ref{eqdef:Pbn}, right) is an upper-semi-continuous (resp.\ lower-semi-continuous) map $u : \overline \Omega \to \mR$ such that (I) $u \leq \sigma$ (resp.\ $u\geq \sigma$) on $\partial \Omega$ and (II)
	 for any $x \in \Omega$ and any $\vp \in C^2(\Omega)$ such that $u-\vp$ has a local maximum (resp.\ minimum) at $x$, one has $D(\nabla^2 \vp(x)) \geq \rho(x)$ (resp.\ $\leq \rho(x)$).
	 
	 A solution to (\ref{eqdef:Pbn}, right) is a map $u : \overline \Omega \to \mR$ which is both a super-solution and a sub-solution.
\end{definition}

Replacing ``local maximum'' with ``strict global maximum'' (resp. minimum) in Definition \ref{def:SubSup} yields an equivalent notion of sub- and super-solutions \cite{Crandall:1992kn}.
Super- and sub-solutions of (\ref{eqdef:Pbn}, right) obey a comparison principle, proved Proposition \ref{prop:ComparisonPrinciple}. 
Given symmetric matrices $M, M' \in S_d$, we write $M \preceq M'$ iff the difference $M'-M$ is positive semi-definite.

\begin{lemma}
\label{lem:MinkowskiD}
For any $M,M' \in S_d$, one has: $M \preceq M' \Rightarrow D(M) \leq D(M')$. More quantitatively, if $M, H \in S_d$ are such that $D(M)>0$ and $D(H)>0$ then 
	\begin{equation*}
		D(M+H)^\frac 1 d  \geq D(M)^\frac 1 d + D(H)^\frac 1 d.
	\end{equation*}
\end{lemma}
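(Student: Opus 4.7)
The plan is to exploit the explicit description of the polytopes
\[
K_M := \{g \in \mR^d; \, \forall e \in V, \, 2\<g,e\> \leq \<e,Me\>\},
\]
so that $D(M) = \Leb(K_M)$, and then to reduce both statements to elementary properties of these convex bodies together with the classical Brunn--Minkowski inequality.

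First I would prove the monotonicity part. If $M \preceq M'$ then $\<e,Me\> \leq \<e,M'e\>$ for every $e \in V$, so every half-space constraint defining $K_M$ is tightened compared with the corresponding constraint for $K_{M'}$. This yields $K_M \subset K_{M'}$, and taking Lebesgue measure gives $D(M) \leq D(M')$.

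For the quantitative inequality, the key observation is the inclusion $K_M + K_H \subset K_{M+H}$, where the left-hand side is the Minkowski sum. Indeed, for $g \in K_M$ and $g' \in K_H$, adding the inequalities $2\<g,e\> \leq \<e,Me\>$ and $2\<g',e\> \leq \<e,He\>$ gives $2\<g+g',e\> \leq \<e,(M+H)e\>$ for every $e \in V$, hence $g+g' \in K_{M+H}$. The hypothesis $D(M)>0$ and $D(H)>0$ ensures that $K_M$ and $K_H$ are convex bodies of positive $d$-dimensional measure (they are always convex as intersections of finitely many half-spaces). Applying Brunn--Minkowski to $K_M$ and $K_H$, and combining with the above inclusion, yields
\[
D(M+H)^{\frac 1 d} = \Leb(K_{M+H})^{\frac 1 d} \geq \Leb(K_M + K_H)^{\frac 1 d} \geq \Leb(K_M)^{\frac 1 d} + \Leb(K_H)^{\frac 1 d} = D(M)^{\frac 1 d}+D(H)^{\frac 1 d},
\]
which is the announced estimate.

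There is essentially no obstacle: the only point worth checking carefully is that the positivity hypotheses on $D(M)$ and $D(H)$ are what is needed to legitimately invoke Brunn--Minkowski (convex bodies of positive measure), since when $M$ or $H$ fails to be positive semi-definite the associated polytope can be empty or lower-dimensional. The monotonicity statement is then an immediate by-product of the same inclusion argument with $H := M'-M \succeq 0$, noting that $0 \in K_{M'-M}$ so that $K_M \subset K_M + K_{M'-M} \subset K_{M'}$.
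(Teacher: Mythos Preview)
Your proof is correct and follows essentially the same route as the paper: introduce the polytopes $K_M$, obtain monotonicity from the inclusion $K_M \subset K_{M'}$ when $M \preceq M'$, and derive the quantitative inequality from $K_M + K_H \subset K_{M+H}$ combined with Brunn--Minkowski. The only addition you make is the closing remark recasting monotonicity via $0 \in K_{M'-M}$, which is a harmless alternative phrasing of the same inclusion argument.
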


\begin{proof}
	Let $K(M) \subset \mR^d$ be the polytope appearing in \eqref{eqdef:DM}, so that $D(M) = \Leb K(M)$. Then
	\begin{equation*}
		M \preceq M' \ \Rightarrow \ 
		\forall e \in V, \|e\|^2_M \preceq \|e\|^2_{M'} \ \Rightarrow \
		K(M) \subset K(M') \ \Rightarrow \ D(M) \leq D(M').
	\end{equation*}
	Second point: since $\<e,(M+H) e\> = \<e,M e\>+\<e,H e\>$, 
	the set $K(M+H)$ contains the Minkowski sum $K(M)+K(H)$. The announced result follows from Brunn-Minkowski's inequality.
\end{proof}

\begin{proposition}
\label{prop:ComparisonPrinciple}
	If $u_*$ is a sub-solution of (\ref{eqdef:Pbn}, right), and $u^*$ a super-solution, then $u_* \leq u^*$.
\end{proposition}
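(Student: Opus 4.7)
The strategy is the classical Crandall-Ishii-Lions doubling-of-variables argument, combined with a Monge-Ampere-style perturbation that exploits the Brunn-Minkowski inequality of Lemma \ref{lem:MinkowskiD} to compensate for the fact that $D$ is only monotone and not strictly monotone. I would argue by contradiction, assuming $\sup_{\overline\Omega}(u_* - u^*) > 0$. Since $u_* \leq \sigma \leq u^*$ on $\partial\Omega$, this supremum is attained at some interior point of $\Omega$.

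To promote $u_*$ to a strict sub-solution, fix $\phi(x) := \frac{1}{2}|x|^2 - C$ with $C$ large enough that $\phi \leq 0$ on $\overline\Omega$, and set $u_{*,\epsilon} := u_* + \epsilon \phi$ for small $\epsilon > 0$. Then $u_{*,\epsilon}$ is still upper-semi-continuous, obeys $u_{*,\epsilon} \leq \sigma$ on $\partial\Omega$, and still satisfies $\sup(u_{*,\epsilon} - u^*) > 0$ for $\epsilon$ small. I then verify that $u_{*,\epsilon}$ is a \emph{strict} viscosity sub-solution: if $\vp \in C^2$ and $u_{*,\epsilon} - \vp$ has a local maximum at $x$, then so does $u_* - (\vp - \epsilon \phi)$, and the sub-solution property of $u_*$ gives $D(\nabla^2 \vp(x) - \epsilon I) \geq \rho(x) > 0$. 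Applying Lemma \ref{lem:MinkowskiD} with $M = \nabla^2 \vp(x) - \epsilon I$ and $H = \epsilon I$ then yields
\begin{equation*}
D(\nabla^2\vp(x))^{1/d} \geq D(\nabla^2\vp(x) - \epsilon I)^{1/d} + D(\epsilon I)^{1/d} \geq \rho(x)^{1/d} + \epsilon D(I)^{1/d},
\end{equation*}
so that $D(\nabla^2\vp(x)) \geq \rho(x) + \delta_\epsilon$ for some $\delta_\epsilon > 0$ independent of $x$, since $\min_{\overline\Omega}\rho > 0$ and $D(I) > 0$.

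Next I apply the Crandall-Ishii doubling-of-variables technique to $u_{*,\epsilon}$ and $u^*$: maximize $\Phi_\alpha(x,y) := u_{*,\epsilon}(x) - u^*(y) - \frac{\alpha}{2}|x-y|^2$ over $\overline\Omega \times \overline\Omega$ at $(x_\alpha, y_\alpha)$. Standard arguments yield $(x_\alpha, y_\alpha) \to (\hat x, \hat x)$ with $\hat x \in \Omega$ and $\alpha\,|x_\alpha - y_\alpha|^2 \to 0$, so for $\alpha$ large both points lie in $\Omega$. Crandall-Ishii's lemma then produces matrices $X, Y \in S_d$ with $X \preceq Y$ such that $X$ is in the closure of the second-order superjet of $u_{*,\epsilon}$ at $x_\alpha$ and $Y$ in the closure of the subjet of $u^*$ at $y_\alpha$. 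Combining the strict sub-solution inequality, the super-solution inequality, and the monotonicity part of Lemma \ref{lem:MinkowskiD} gives
\begin{equation*}
\rho(x_\alpha) + \delta_\epsilon \leq D(X) \leq D(Y) \leq \rho(y_\alpha).
\end{equation*}
Letting $\alpha \to \infty$ and using continuity of $\rho$ produces $\delta_\epsilon \leq 0$, the desired contradiction.

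The only step that departs from the textbook Crandall-Ishii recipe is the Brunn-Minkowski perturbation producing a quantitatively strict sub-solution $u_{*,\epsilon}$; this is the main obstacle and the essential ingredient compensating for the lack of strict monotonicity of $D$. The choice of $\phi$ with $\nabla^2 \phi = I$ on the whole of $\overline\Omega$ is convenient because it decouples the perturbation from the reference point, and the assumption $\rho > 0$ on $\overline\Omega$ is crucial to pass from the $(1/d)$-th power estimate to a uniform additive gain $\delta_\epsilon$.
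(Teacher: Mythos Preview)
Your argument is correct and follows essentially the same route as the paper: perturb $u_*$ by a small multiple of a strictly convex quadratic and use the Brunn--Minkowski inequality of Lemma~\ref{lem:MinkowskiD} to manufacture the missing strictness. The only difference is presentational: the paper packages the gain into an auxiliary operator $F_\varepsilon(u):=-D(\nabla^2 u)^{1/d}+\varepsilon u$, which is proper (strictly monotone in $u$) in the Crandall--Ishii--Lions sense, and then cites their standard comparison principle directly, whereas you carry out the doubling-of-variables and Ishii-lemma computation explicitly, with the uniform gap $\delta_\varepsilon$ replacing the $\varepsilon u$ term. Your version is more self-contained and makes the roles of $\min_{\overline\Omega}\rho>0$ and $D(I)>0$ explicit; the paper's is terser but leaves the reader to unpack the CIL citation.
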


\begin{proof}
The result does not immediately follow from \cite{Crandall:1992kn} because the operator 
$-D(\nabla^2 u)$
is only degenerate elliptic. The following operator is in contrast strictly elliptic when $\ve>0$: 
\begin{equation}
	F_\ve(u) := -D(\nabla^2 u)^\frac 1 d + \ve u.
\end{equation}
Let $M := \sup_{\overline \Omega} u_*-u^*$, which is finite by upper-semi-continuity. For contradiction, we assume $M>0$. Let also $r := \max \{\|x\|; x \in \overline \Omega\}$ and 
\begin{equation}
	u_\ve(x) := u_*(x) + \frac {\ve M} 2 (\|x\|^2-r^2). 
\end{equation}
One has $u_\ve \leq u_* \leq u^*$ on $\partial \omega$, and by Lemma \ref{lem:MinkowskiD} 
\begin{align}
	F_\ve(u^*)-F_\ve(u_\ve) \geq \ve M -\ve(u_* - u^*) \geq 0.
\end{align}
Applying the standard comparison principle \cite{Crandall:1992kn} to the strictly elliptic $F_\ve$ we obtain $u^* \geq u_\ve$, hence $u^*-u_* \geq  - \ve M r^2/2$.  Letting $\ve \to 0$ yields $u^* \geq u_*$ as announced.
\end{proof}

Weal solutions can be extracted from sequences of discrete solutions:
for each $n \geq 1$, extend $u_n$ by bilinear interpolation (or any other local interpolation procedure) on $\Omega_n := \{x \in \Omega; \, d(x, \partial \Omega) \geq n^{-1} \sqrt d\}$, and define $u_*, u^* : \overline \Omega \to \mR$ by 
\begin{align}
	u_* (x) &:= \limsup_{n \to \infty} u_n(x), & u^*(x) &:= \liminf_{n \to \infty} u_n(x).
\end{align}

\begin{lemma}
$u_*$ and $u^*$ are respectively a sub-solution and a super-solution of (\ref{eqdef:Pbn}, right). 
\end{lemma}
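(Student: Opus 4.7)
The argument is the Barles--Souganidis template for monotone consistent discretizations; by symmetry I detail only the sub-solution property of $u_*$, the super-solution case for $u^*$ following by sign reversal. Upper-semi-continuity of $u_*$ is automatic from its definition as a pointwise limsup of continuous interpolants. Fix $x_0 \in \Omega$ and $\vp \in C^2(\Omega)$ such that $u_* - \vp$ attains a strict global maximum at $x_0$ (an equivalent formulation per the remark following Definition \ref{def:SubSup}). Corollary \ref{corol:Proper}, together with uniform bounds on $\rho$ and $\sigma$, yields a uniform $L^\infty$ bound on $(u_n)$, and a standard extraction produces a subsequence (still indexed by $n$) and grid points $x_n \in X_n$ with $x_n \to x_0$ and $u_n(x_n) \to u_*(x_0)$, at which $u_n - \vp$ attains a local maximum on the grid $X_n$. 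For each $e \in V$ and $n$ large enough that $x_n \pm e/n \in X_n$, comparing $u_n$ and $\vp$ at $x_n$ and its two neighbors rearranges into $\Delta_{e/n} u_n(x_n) \leq \Delta_{e/n} \vp(x_n)$.

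The polytope defining $\cD_n u_n(x_n)$ is therefore contained in that defining $\cD_n \vp(x_n)$, so $\rho(x_n) = \cD_n u_n(x_n) \leq \cD_n \vp(x_n)$. Since $\vp \in C^2$ and $V$ is a finite fixed stencil, $\Delta_{e/n} \vp(x_n) \to \<e, \nabla^2 \vp(x_0) e\>$ for every $e \in V$. Symmetry of $V$ and its spanning of $\mR^d$ keep the polytopes uniformly bounded (each pair $\pm e \in V$ controls $|\<g,e\>|$), and the Lebesgue measure of a bounded convex polytope is continuous under variation of the right-hand sides of its defining inequalities, so $\cD_n \vp(x_n) \to D(\nabla^2 \vp(x_0))$. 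Combined with $\rho(x_n) \to \rho(x_0)$, this yields the desired interior inequality $\rho(x_0) \leq D(\nabla^2 \vp(x_0))$.

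For the boundary inequality $u_* \leq \sigma$ on $\partial \Omega$, a discrete barrier argument is needed. Given $y_0 \in \partial \Omega$ and $\varepsilon > 0$ one constructs a smooth function $w$ on $\overline \Omega$ with $w(y_0) \leq \sigma(y_0) + \varepsilon$, $w \geq \sigma$ on $\partial \Omega$, and $\cD_V w \leq \rho$ at every $x \in X_n$: a convex quadratic $w(x) = \sigma(y_0) + \varepsilon + \<p,x-y_0\> + \frac 1 2 \|x-y_0\|_M^2$, with $M \in S_d^+$ chosen inside the consistency set of $\cD_V$ and $\det M \leq \min \rho$, gives $\cD_V w = \det M \leq \rho$ thanks to Proposition \ref{prop:Consistency}. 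The discrete maximum principle inherited from the degenerate ellipticity of $\cD_V$ (itself a consequence of Lemma \ref{lem:PerronFrobenius}) forces $u_n \leq w$ on $X_n$, hence $u_*(y_0) \leq w(y_0) \leq \sigma(y_0) + \varepsilon$, and $\varepsilon \to 0$ concludes. The boundary inequality for $u^*$ uses symmetric lower barriers, exploiting the vanishing $D(M) = 0$ whenever some $\<e,Me\> \leq 0$ for $e \in V$.

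The interior step is straightforward once the monotonicity of \eqref{eqdef:LBR} in its second-difference arguments and its consistency on $C^2$ test functions are combined with a standard viscosity extraction. The main obstacle is the boundary step: the barriers must simultaneously lie above $\sigma$ on all of $\partial \Omega$, respect the boundary-adapted second differences \eqref{eqdef:DeltaBoundary}, and be tractable under the discrete maximum principle. In particular, the classical assumption that $\sigma$ admits a suitable convex extension to $\overline \Omega$, implicit in the well-posedness of \eqref{eqdef:Pbn}, is what underwrites the barrier construction.
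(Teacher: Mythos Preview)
Your interior argument is exactly the Barles--Souganidis routine the paper carries out: uniform bounds on $(u_n)$ from Corollary~\ref{corol:Proper}, a sequence of discrete maximizers $x_n\to x_0$, monotonicity of $\cD_n$ in the second differences giving $\rho(x_n)=\cD_n u_n(x_n)\le\cD_n\vp(x_n)$, and passage to the limit using $\Delta_{e/n}\vp(x_n)\to\<e,\nabla^2\vp(x_0)e\>$. On this part there is no difference between your approach and the paper's.

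Where you diverge is the boundary step. The paper uses no barriers at all: each $u_n\in\mU_n$ already satisfies $u_n\equiv\sigma$ on $\partial\Omega$, so the pointwise $\limsup$/$\liminf$ equal $\sigma$ on $\partial\Omega$ and condition~(I) of Definition~\ref{def:SubSup} is immediate (semi-continuity up to $\partial\Omega$ is simply not discussed). Your additional barrier argument, however, has genuine gaps. For the upper barrier you simultaneously require $M$ in the consistency set of $\cD_V$ (hence bounded condition number by Corollary~\ref{corol:FiniteStencil}), $\det M\le\min_\Omega\rho$, and the quadratic $\tfrac12\|x-y_0\|_M^2$ large enough to force $w\ge\sigma$ on the whole of $\partial\Omega$; when $\min_\Omega\rho$ is small the first two constraints force all eigenvalues of $M$ to be small, making the third unattainable for a merely continuous $\sigma$. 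The lower-barrier sketch goes in the wrong direction: if some $\<e,Me\>\le 0$ then $D(M)=0$ gives $\cD_n w=0\le\rho$, i.e.\ $w$ is a discrete \emph{super}-solution, and any comparison would yield $u_n\le w$, not the bound $u_n\ge w$ you need for $u^*\ge\sigma$. Finally, the ``discrete maximum principle inherited from Lemma~\ref{lem:PerronFrobenius}'' is not what that lemma furnishes: in the paper it is used only to prove invertibility of the Jacobian $df$, and no discrete comparison principle for the nonlinear operator $\cD_V$ is stated or proved.
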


\begin{proof}
	Inspection of the proof of Corollary \ref{corol:Proper} yields the quantitive estimate
	\begin{equation}
		\min_{\partial \Omega} \sigma - \left(\frac 1 {n^d} \sum_{x \in X_n} \frac{\rho(x)}{\omega_d}\right)^\frac 1 d \diam(\Omega) \leq u_n \leq \max_{\partial \Omega} \sigma.
	\end{equation}
	Note that $n^{-d} \sum_{x \in X_n} \rho(x) \to \int_\Omega \rho$ as $n \to \infty$. (Also, the multiplicative term $h_*$ appearing in Corollary \ref{corol:Proper} equals $1$ with our choice of discrete domain $X_n = \Omega \cap n^{-1}\mZ^d$, since $]x,x+e/n] \cap (X_n \cup \partial \Omega)$ is non-empty for all $x \in X_n$, $e\in \mZ^d$.)
	
	The maps $u_n \in \mU_n$ are therefore bounded independently of $n$, hence $u_*$ and $u^*$ are well defined. From this point, the announced result follows by a standard argument: let $x \in \Omega$ and $\vp \in C^2(\Omega)$ be such that $u^*-\vp$ attains a strict global maximum at $x\in \Omega$. For each $n \geq 1$, let $x_n$ be the element of $X_n$ which maximizes $u_n-\vp$. By monotonicity $\cD_n \vp(x_n) \geq \cD_n u_n (x_n) = \rho(x_n)$. Observing that $x_n \to x$ as $n \to \infty$, we obtain $D(\nabla^2 \vp(x)) \geq \rho(x)$ in the limit. This establishes that $u^*$ is a super-solution, and likewise $u_*$ is a sub-solution.
\end{proof}

\begin{corollary}
\label{corol:ExistenceContinuous}
	One has $u_* = u^*$, and this map is the unique solution $u_\infty$ of (\ref{eqdef:Pbn}, right). As a result, $u_n$ converges pointwise to $u_\infty$ as $n \to \infty$.
\end{corollary}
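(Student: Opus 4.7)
The plan is to combine the preceding lemma with the comparison principle in a standard viscosity-solution squeeze argument, and then read off the pointwise convergence for free.

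First, I would invoke the preceding lemma to know that $u_*$ is a sub-solution and $u^*$ is a super-solution of (\ref{eqdef:Pbn}, right). Observe that by construction both functions satisfy the required boundary condition: for $x \in \partial \Omega$ one has $u_n(x) = \sigma(x)$ for every $n$, so $u_*(x) = u^*(x) = \sigma(x)$. Hence the hypotheses of Proposition \ref{prop:ComparisonPrinciple} are met, and that proposition yields the inequality
\begin{equation*}
u_* \leq u^* \quad \text{on } \overline \Omega.
\end{equation*}

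Next, I would observe that the reverse inequality is completely elementary: by the definitions
\begin{equation*}
u_*(x) = \limsup_{n \to \infty} u_n(x) \geq \liminf_{n \to \infty} u_n(x) = u^*(x)
\end{equation*}
for every $x \in \overline \Omega$. Combining the two inequalities forces $u_* = u^*$; call this common value $u_\infty$. Being both a sub-solution and a super-solution, $u_\infty$ is, by Definition \ref{def:SubSup}, a solution of (\ref{eqdef:Pbn}, right). Uniqueness follows by applying Proposition \ref{prop:ComparisonPrinciple} twice to any two solutions $v, w$ (each being simultaneously a sub- and a super-solution), giving $v \leq w$ and $w \leq v$.

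Finally, the pointwise convergence $u_n \to u_\infty$ is immediate: the equality $\limsup_n u_n(x) = \liminf_n u_n(x) = u_\infty(x)$ at every $x$ is exactly the statement that the sequence $(u_n(x))_n$ converges to $u_\infty(x)$. I do not foresee any genuine obstacle: the work has all been front-loaded into the comparison principle (Proposition \ref{prop:ComparisonPrinciple}) and the sub/super-solution property of the relaxed limits (the preceding lemma); what remains here is the short sandwich argument, with the only mild subtlety being to check that $u_*, u^*$ match $\sigma$ on $\partial \Omega$ so that Proposition \ref{prop:ComparisonPrinciple} actually applies.
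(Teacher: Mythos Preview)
Your proposal is correct and follows essentially the same squeeze argument as the paper: the comparison principle (Proposition \ref{prop:ComparisonPrinciple}) gives $u_* \leq u^*$, the elementary inequality $\limsup \geq \liminf$ gives $u_* \geq u^*$, and pointwise convergence and uniqueness then drop out immediately. Your extra remark that $u_n = \sigma$ on $\partial\Omega$ so that the boundary requirement in Definition \ref{def:SubSup} is met is a welcome clarification, but otherwise there is nothing to add.
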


\begin{proof}
	By construction $u_* \leq u^*$, but by the comparison principle $u_* \geq u^*$, see Proposition \ref{prop:ComparisonPrinciple}. 
	Therefore (\ref{eqdef:Pbn}, right) admits the solution $u_\infty := u_*=u^*$ which, again by the comparison principle, is its unique solution. Finally, for any $x \in \Omega$ on has as $n\to \infty$: $\limsup u_n(x) =u^*(x)=u_*(x)= \liminf u_n(x)$, hence $u_\infty(x) = \lim u_n(x)$.
\end{proof}


\section{Numerical experiments}
\label{sec:Num}

We implemented the three%
\footnote{%
The filtered method \cite{Froese:2013ez}, which accuracy may be competitive, was not implemented due to lack of time.
}
Monge Ampere discretizations described in the introduction: the Finite Differences (FD) scheme $\cD^\FD$, the Wide Stencil (WS) scheme $\cD^\WS_\cB$, and the proposed scheme $\cD_V$. All presented experiments are three dimensional. The latter two schemes require choosing a collection $\cB$ of triplets of orthogonal vectors, or a stencil $V$, see Table \ref{table:Stencils}. 
We recall that scheme FD is consistent but not monotone (or degenerate elliptic). Scheme WS is monotone and thus  benefits from the associated convergence guarantees, but suffers from significant consistency errors, see Figure \ref{fig:Relative error}.
The proposed scheme is simultaneously monotone and consistent, provided the PDE solution hessian condition number is bounded, and the scheme stencil $V$ is sufficiently large, see Proposition \ref{prop:Consistency} and Corollary \ref{corol:FiniteStencil}. A consistency error arises when these conditions are not satisfied, see Figure \ref{fig:Relative error}. 

We limit our attention to synthetic test cases, posed on the unit cube $\Omega := ]0,1[^3$. A known convex function $U: \overline \Omega \to \R$ is numerically recovered from its hessian determinant $\rho := \det(\nabla^2 U)$, and its boundary values $\sigma := U_{|\partial \Omega}$. 
Three test cases are considered.
\begin{itemize}
\item (Quadratic) $U(x) := \frac 1 2 \<x, M x\>$, where $M$ was chosen randomly, with $\sqrt {\|M\| \|M^{-1}\|}\approx 8.5$.
\item (Smoothed cone) $U(x) := \sqrt{\delta^2+ \|x-x_0\|^2}$, with $\delta := 0.1$ and $x_0:=(1/2,1/2,1/2)$. 
\item (Singular, \cite{Froese:2013ez})  $U(x) := -\sqrt{3-\|x\|^2}$.
\end{itemize}
A Damped Newton solver is applied to the discrete system \eqref{eq:DiscreteSys}, starting from the trivial seed $u(x) := \|x\|^2$. 

Quadratic test case. The chosen matrix $M$ does not belong to the consistency set of schemes $\cD_V$ and $\cD^\WS_\cB$, with the chosen $V$ and $\cB$; hence the numerical error reflects their consistency error, and is resolution independent. On the topic of consistency, it was determined using Proposition \ref{prop:Consistency} and some semi-definite programming that the consistency set of the proposed scheme $\cD_V$ contains all $M\in S_3^+$ such that $\Tr(M)/(\det M)^\frac 1 3$ is less than $7.8$ with the small stencil $V$, and less than $11.9$ with the large $V$, see Table \ref{table:Stencils}. The always consistent but non-monotone scheme $\cD^\FD$ finds the exact solution for a range of resolutions, up to machine precision, but switches to a completely erroneous solution at other resolutions.

Smoothed Cone test case. The test function $U$ is smooth, but the test is not as simple as it looks since (i) $\nabla U$ varies quickly close to the center $x_0$, but (ii) $\nabla^2 U$ is almost degenerate far from $x_0$. Point (i) favors small stencils, while point (ii) requires a good angular resolution. Scheme WS is most efficient with a small triplets collection $\cB$ at low resolutions, but with a medium or large one at high resolutions. Picking a stencil $V$ is easier with the proposed scheme: the larger one here always yields a smaller error, albeit at a higher computational cost. The non-monotone scheme FD passes this test as well.
We give some computation times for this test, which are typical of our experience. On a 2.7 Ghz Laptop using a single core, using a $50 \times 50 \times 50$ discretization grid $X$, computations took (in minutes): $4.7$ and $9.7$ with the proposed scheme, small and large $V$ respectively; $6.4$, $24$ and $65$ with scheme WS, small medium and large $\cB$ respectively; $6.5$ with scheme FD.

Singular test case. The function $U$ is not smooth at the point $(1,1,1)$, where its gradient is formally $(+\infty, +\infty, +\infty)$. The non-monotone scheme FD completely fails this test: numerical error does not decrease as resolution increases, and initializing the Newton solver with the exact solution $U$ does not even help, as observed in \cite{Froese:2013ez}. Scheme WS will recover the solution, if the bases collection $\cB$ is large enough, but numerical error decays quite slowly. The proposed scheme works flawlessly: it has the same convergence guarantees as scheme WS, but yields an $L^\infty$ error about $20$ times smaller on a $50^3$ grid. Curiously, numerical error is identical with the small and the large stencil $V$.

\begin{table}
\begin{tabular}{|c|cc|}
\hline
Proposed & Stencil vectors & $\#(V)$\\
\hline 
Small & $(1,0,0), (1,1,0), (1,1,1)$ & 13 \\
Large & Same and $(2,1,0), (2,1,1)$ & 37\\
\hline
\end{tabular}
\hspace{0mm}
\begin{tabular}{|c|cc|}
\hline
Scheme WS & Orthogonal triplets & $\#(\cB)$\\ 
\hline 
Small & All within $\{-1,0,1\}^3$ & 6 \\
Medium & All within $\{-2, \cdots, 2\}^3$ & 43 \\
Large & All within $\{-3, \cdots, 3\}^3$ & 82\\
\hline
\end{tabular} 

\caption{\emph{Left:} Stencil $V$ used with the proposed scheme $\cD_V$. In addition to those indicated $(\alpha_1, \alpha_2, \alpha_3)$, the stencil contains all vectors $(\ve_1 \alpha_{\sigma(1)}, \ve_2 \alpha_{\sigma(2)}, \ve_3 \alpha_{\sigma(3)})$ obtained by permuting and changing the sign of their coordinates. Opposite vectors are identified  when evaluating $\#(V)$.
\newline
\emph{Right:} Collection $\cB$ of orthogonal triplets used with the Wide-Stencil scheme $\cD^\WS_\cB$. Triplets are counted up to reordering their components, and changing their sign: $(e_0, e_1, e_2) \sim (\ve_1 e_{\sigma(1)}, \ve_2 e_{\sigma(2)}, \ve_3 e_{\sigma(3)})$. Vectors with non-coprime coordinates, such as $(2,2,0)$, are rejected. 
}
\label{table:Stencils}
\end{table}

\begin{figure}
\centering
\begin{tabular}{cc|ccc}
\text{Proposed, Small} & \text{Proposed, Large} & \text{WS, Small} &  \text{WS, Medium} &  \text{WS, Large} \\ 
\includegraphics[width=2.9cm]{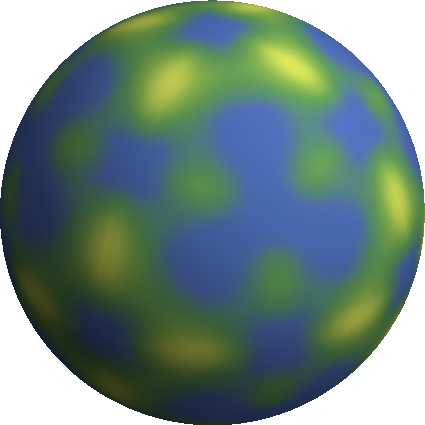} &
\includegraphics[width=2.9cm]{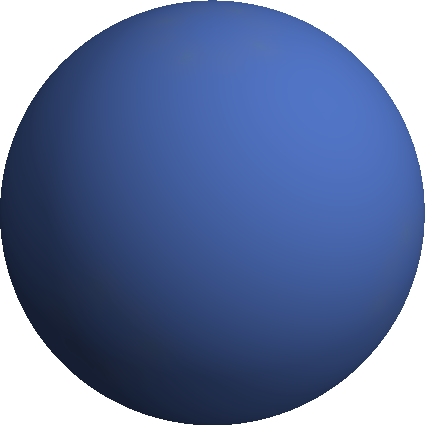} &
\includegraphics[width=2.9cm]{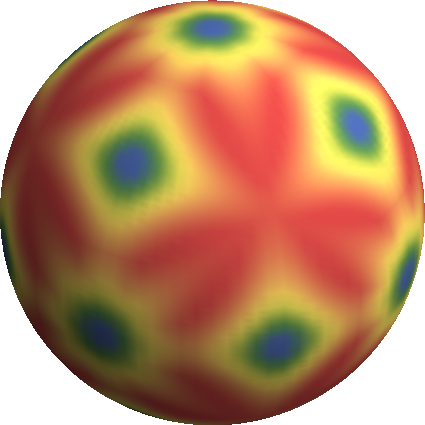} &
\includegraphics[width=2.9cm]{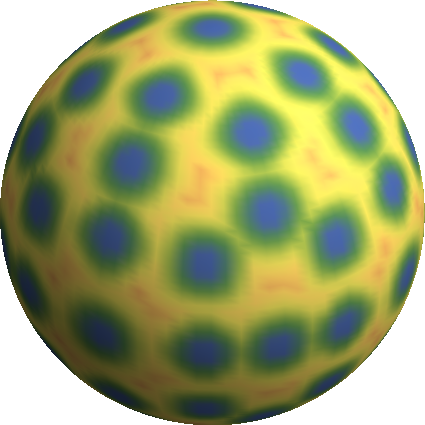} &
\includegraphics[width=2.9cm]{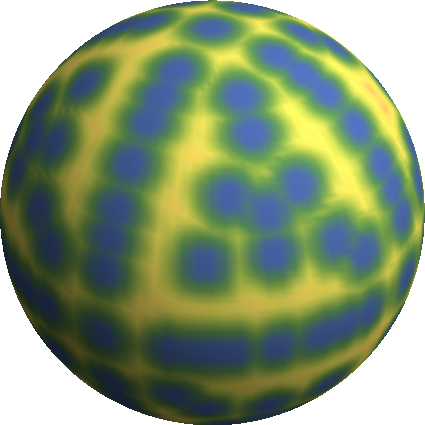} \\
\includegraphics[width=2.9cm]{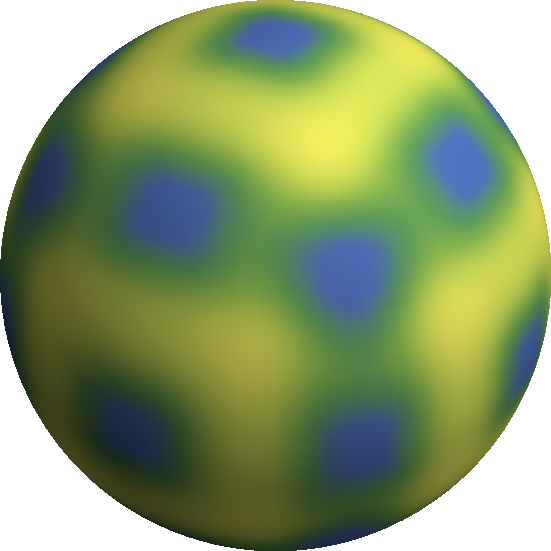} &
\includegraphics[width=2.9cm]{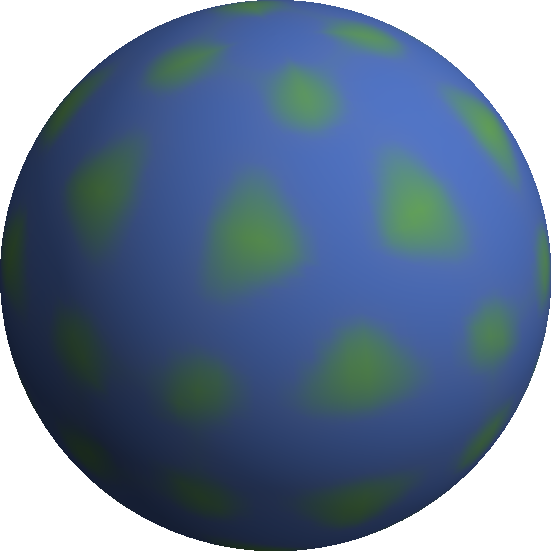} &
\includegraphics[width=2.9cm]{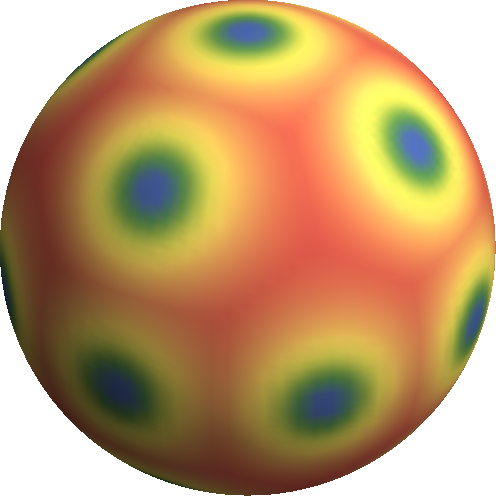} &
\includegraphics[width=2.9cm]{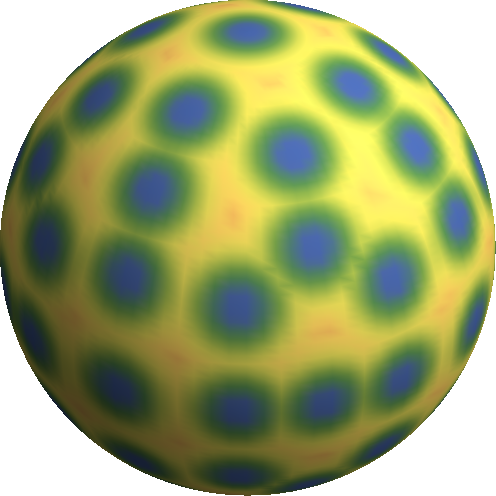} &
\includegraphics[width=2.9cm]{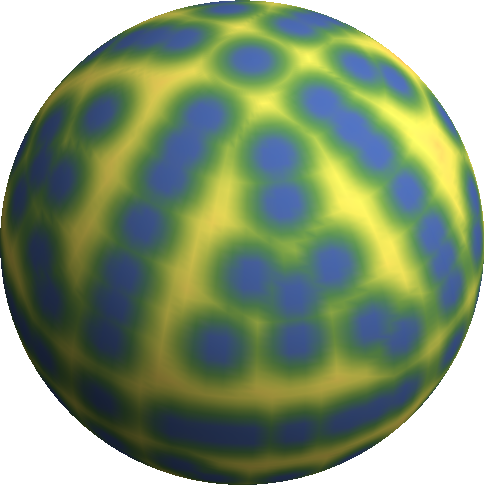} \\
\includegraphics[width=2.9cm]{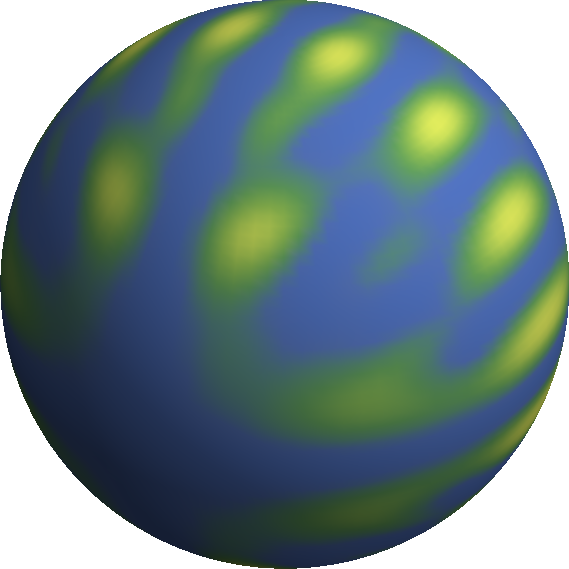} &
\includegraphics[width=2.9cm]{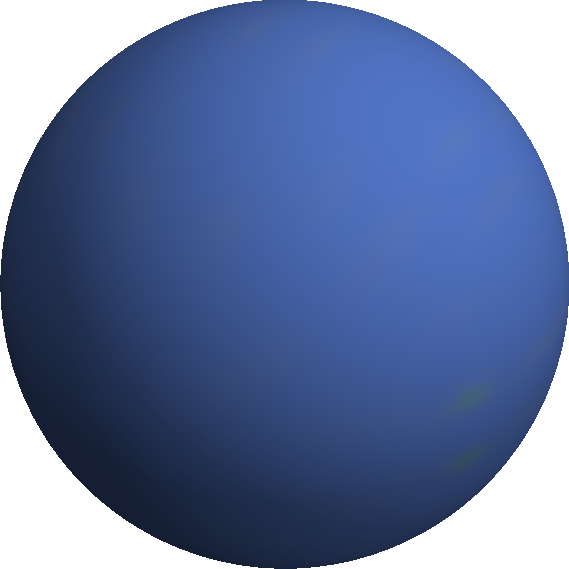} &
\includegraphics[width=2.9cm]{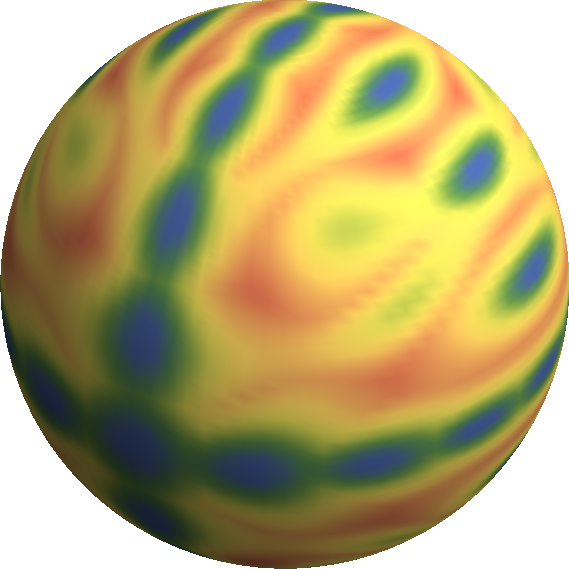} &
\includegraphics[width=2.9cm]{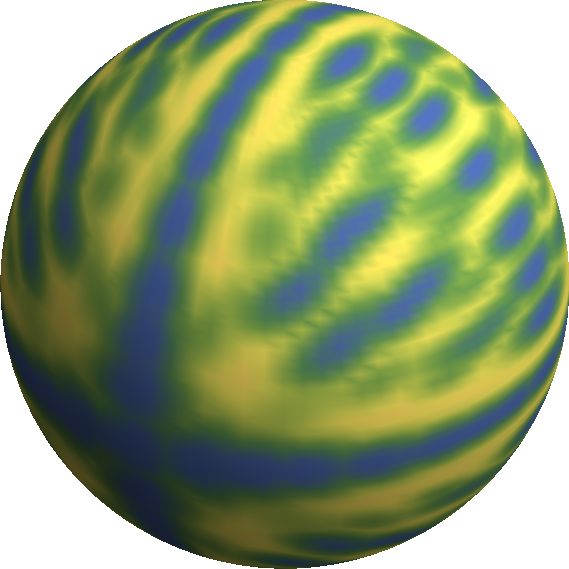} &
\includegraphics[width=2.9cm]{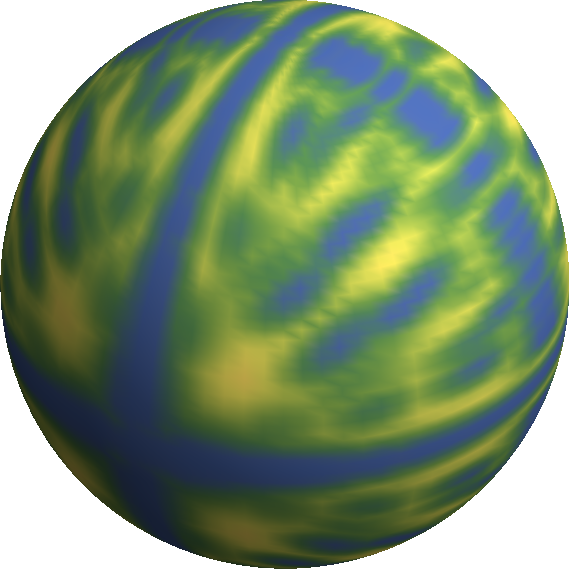} 
\end{tabular}
\caption{
Relative consistency error $(\cD u_M - \det(M) ) / \cD u_M$ ($\text{blue}=0$, $\text{red}=1$) for a quadratic \eqref{eqdef:MNorm} map $u_M$ associated to a matrix $M = M(v)$, with $v$ in the unit sphere (hence the spherical plot). 
First line: $M(v)$ has eigenvalues $6^2, 1,1$, the former with eigenvector $v$. Second line likewise with eigenvalues $6^{-2},1,1$. Third line $M(v) = R(v) D R(v)$ where $R(v)$ is the rotation of axis $v$ and angle $\pi$, and $D$ is diagonal of entries $6,1,1/6$.}
\label{fig:Relative error}
\end{figure}

\begin{figure}
\centering
\includegraphics[width=3.5cm]{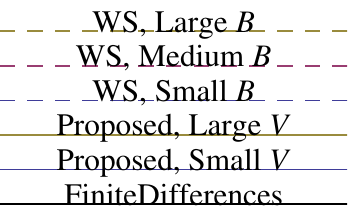}
\includegraphics[width=4cm]{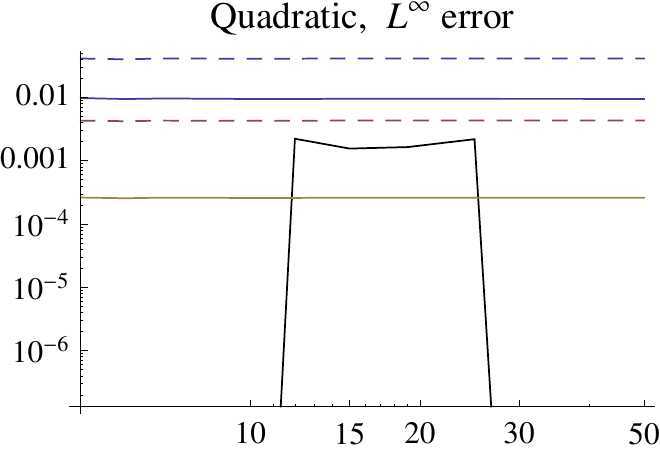}
\includegraphics[width=4cm]{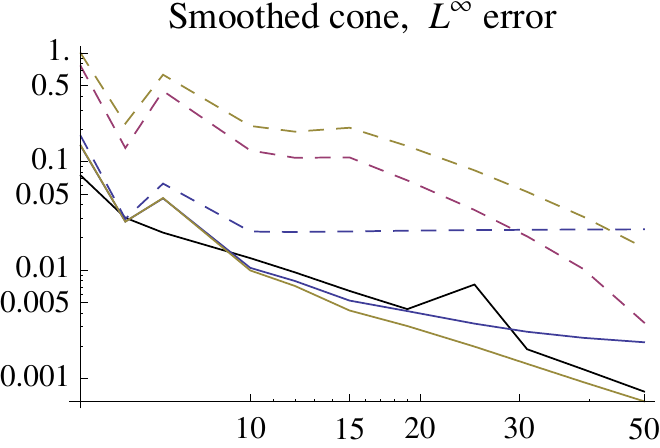}
\includegraphics[width=4cm]{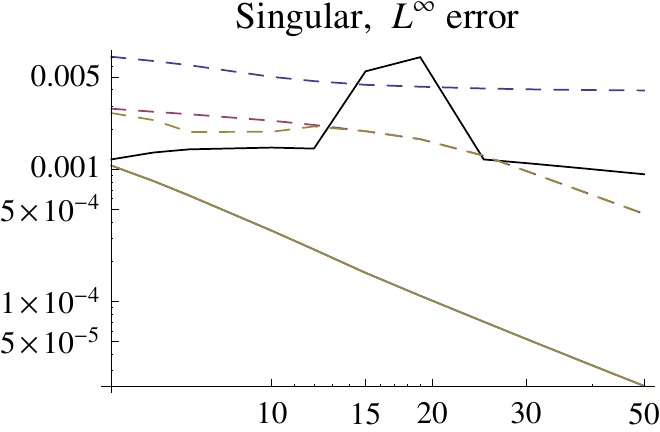}
 
\caption{
Log-Log plot of the $L^\infty$ error, as a function of resolution $n$, for the three test cases. 
Discretization set $X\subset \Omega$ has $n^3$ points.
Scheme stencils on Figure \ref{table:Stencils}.
}
\label{fig:ErrorPlots}
\end{figure}

\bibliographystyle{alpha}
\bibliography{\pathBib/AllPapers}

\end{document}